\newcommand{\mb}{\begingroup\setlength\arraycolsep{2pt}\def\arraystretch{1.0}\begin{pmatrix}}
\newcommand{\me}{\end{pmatrix}\endgroup}
\begin{document}

\title{Almost Designs and Their Links with Balanced Incomplete Block Designs
}


\author{Jerod Michel         \and
        Qi Wang 
}


\institute{Jerod Michel \at
              Department of Computer Science and Engineering, Southern University of Science and Technology, Shenzhen 518055, China. \\
              \email{michelj@sustc.edu.cn}           
           \and
           Qi Wang \at
              Department of Computer Science and Engineering, Southern University of Science and Technology, Shenzhen 518055, China.\\
              \email{wangqi@sustc.edu.cn}\newline The authors were supported by the National Science Foundation of China under Grant No. 11601220.
}

\date{Received: date / Accepted: date}

\maketitle

\begin{abstract}
Almost designs ($t$-adesigns) were proposed and discussed by Ding as a certain generalization of combinatorial designs related to almost difference sets. Unlike $t$-designs, it is not clear whether $t$-adesigns need also be $(t-1)$-designs or $(t-1)$-adesigns. In this paper we discuss a particular class of 3-adesigns, i.e., 3-adesigns coming from certain strongly regular graphs and tournaments, and find that these are also $2$-designs. We construct several classes of these, and discuss some of the restrictions on the parameters of such a class. We also construct several new classes of 2-adesigns, and discuss some of their properties as well.
\keywords{Almost difference set, difference set, strongly regular graph, $t$-adesign, tournament.}
\subclass{05B05 \and 05E30}
\end{abstract}

\section{Introduction}\label{sec1}
Combinatorial designs are an interesting subject of combinatorics closely related to finite geometry \cite{BET}, \cite{DEM}, \cite{HIR}, with applications in experiment design \cite{FISH}, coding theory \cite{ASS}, \cite{CUN}, \cite{HP} and cryptography \cite{CDR} \cite{STIN}.
\subsection{Finite incidence structures}
A (finite) {\it incidence structure} is a triple $(V,\mathcal{B},I)$ such that $V$ is a finite set of elements called {\em points}, $\mathcal{B}$ is a finite set of elements called {\em blocks}, and $I$ ($\subseteq V\times\mathcal{B}$) is a symmetric binary relation between $V$ and $\mathcal{B}$. Since, in the following, all incidence structures $(V,\mathcal{B},I)$ are such that $\mathcal{B}$ is a collection (i.e., a multiset) of nonempty subsets of $V$, and $I$ is given by membership (i.e., a point $p\in V$ and a block $B\in\mathcal{B}$ are incident if and only if $p\in B$), we will denote the incidence structure $(V,\mathcal{B},I)$ simply by $(V,\mathcal{B})$. An incidence structure that has no repeated blocks is called {\em simple}. All of the incidence structures discussed in the following are assumed to be simple. A $t$-$(v,k,\lambda)$ {\it design} (or $t$-design, for short) (with $0<t<k<v$) is an incidence structure $(V,\mathcal{B})$ where $V$ is a set of $v$ points and $\mathcal{B}$ is a collection of $k$-subsets of $V$ such that any $t$-subset of $V$ is contained in exactly $\lambda$ blocks \cite{BET}. In the literature, $t$-designs with $t=1$ are sometimes referred to as {\it tactical configurations}, and those with $t=2$ are sometimes referred to as {\it balanced incomplete block designs}. We will denote the number of blocks of an incidence structure by $b$, and the number of blocks containing a given subset $A\subseteq V$ of points by $r_{A}^{\mathcal{B}}$ (when $A$ is a singleton, and $(V,\mathcal{B})$ is a tactical configuration, simply by $r^{\mathcal{B}}$). Then the identities \[
bk=vr^{\mathcal{B}},\] and \[r^{\mathcal{B}}(k-1)=(v-1)\lambda\] restrict the possible sets of parameters of $2$-designs. A $t$-design in which $b=v$ and $r^{\mathcal{B}}=k$ is called {\it symmetric}. The {\it dual} $(V, \mathcal{B})^{\perp}$ of the incidence structure $(V,\mathcal{B})$ is the incidence structure $(\mathcal{B},V)$ with the roles of points and blocks interchanged. A symmetric incidence structure has the same parameters as its dual. For an ambient set $V$, and a subset $A\subseteq V$, we will sometimes denote by $\overline{A}$ its complement $V\setminus A$. The $v\times b$ $0$-$1$ matrix whose rows and columns are indexed by $V$ and $\mathcal{B}$, respectively, is called the {\it incidence matrix} of $(V,\mathcal{B})$.
\par For a matrix $M$, let $(M)_{ij}$ denote the $(i,j)$-th entry of $M$. Let $\mathcal{V}_{M}$ denote the set by which the columns of $M$ are indexed, and let $\mathcal{B}_{M}$ denote the set of supports of the rows of $M$. We will denote by $J$ and $I$ the all-one matrix and the identity matrix, respectively (dimensions will be clear from the context).
\subsection{Difference sets and almost difference sets}
One important way of obtaining (symmetric) balanced incomplete block designs is by constructing difference sets \cite{BET}, \cite{STIN}. Let $G$ be a group (written additively) of order $v$, and let $k$ and $\lambda$ be integers satisfying $2\leq k<v$. A $(v,k,\lambda)$ {\it difference set} in $G$ is a $k$-subset $D\subseteq G$ such that the multiset $\{* \ x-y\mid x,y\in D,x\neq y \ *\}$ contains every nonidentity member of $G$ exactly $\lambda$ times. It is not difficult to see that the incidence structure given by $(G,Dev(D))$ is a $2$-$(v,k,\lambda)$ design, where $Dev(D)$, called the {\it development} of $D$, denotes the set $\{D+g\mid g\in G\}$ (where $D+g:=\{d+g\mid d\in D\}$) of translates of $D$ over $G$.
\par
Almost difference sets are a generalization of difference sets. In the literature there are two different definitions of almost difference sets \cite{DAV}, \cite{DING00}. The following unification was given in \cite{DHM}. A $(v,k,\lambda,s)$ {\it almost difference set} in $G$ is a $k$-subset $D\subseteq G$ such that the multiset $\{* \ x-y\mid x,y\in D,x\neq y \ *\}$ contains $s$ nonidentity members of $G$ with multiplicity $\lambda$, and $v-1-s$ nonidentity members with multiplicity $\lambda+1$.
\par
A difference set can be viewed as an almost difference set with $s=0$ or $s=v-1$. The complement $G\setminus D$ of a $(v,k,\lambda,s)$ almost difference set is an almost difference set with parameters $(v,v-k,v-2k+\lambda,s)$. A simple restriction which can be applied to the parameters of almost difference sets is that $(v-1)(\lambda+1)-s=k(k-1)$ must hold for any $(v,k,\lambda,s)$ almost difference set.
\par
Difference sets and almost difference sets also have extensive applications in various fields such as communications, sequence design, error correcting codes, and CDMA and cryptography \cite{CDR}, \cite{CUN}, \cite{GOL}. For a good survey on almost difference sets, the reader is referred to \cite{CUN}.
\subsection{Strongly Regular Graphs and Tournaments}\label{sec1.3}
We will assume some familiarity with graph theory. A graph $\Gamma=(V,E)$ consists of a vertex set $V$ with $\left|V\right|=n$, an edge set $E$, and a relation that associates with each edge a pair of vertices. A {\it strongly regular graph} with parameters $(n,k,\lambda,\mu)$ is a graph $\Gamma$ with $n$ vertices in which the number of common neighbors of $x$ and $y$ is $k,\lambda$ or $\mu$ according as $x$ and $y$ are equal, adjacent or non-adjacent, respectively. The complement of an $(n,k,\lambda,\mu)$ strongly regular graph is an $(n,n-k-1,n-2k+\mu-2,n-2k+\lambda)$ strongly regular graph. For a good introduction to strongly regular graphs the reader is referred to \cite{VAN}. Strongly regular graphs whose parameters are (up to complementation) of the form $(n,\frac{n-1}{2},\frac{n-5}{4},\frac{n-1}{4})$, are called {\it Paley type}, and are closely related to conference matrices \cite{STIN}. A {\it conference matrix} of order $n$ is an $n\times n$ matrix $C$ with diagonal entries $0$, and off-diagonal entries $\pm 1$, which satisfies $CC^{T} =(n-1)I$. Assume $C$ is such a matrix, and let $S$ be the matrix obtained from $C$ by deleting the first row and column, and let $A$ be the matrix obtained from $S$ by replacing $-1$ by $1$, and $1$ by $0$. If $n \equiv 2 \ ({\rm mod} \ 4)$ then $S$ is symmetric, and $A$ is the adjacency matrix of a strongly regular graph of Paley type. Conference matrices of order $n \equiv 2 \ ({\rm mod} \ 4)$ are in fact equivalent to Paley type strongly regular graphs (see \cite{GOETH} and \cite{REID}). Cayley graphs which are strongly regular are equivalent to partial difference sets. For a formulation of strongly regular graphs in terms of partial difference sets, the reader is referred to \cite{VAN}.
\par
A directed graph $\Gamma=(V,\mathcal{E})$ consists of a vertex set $V$ with $\left|V\right|=n$, and a set $\mathcal{E}$ of ordered pairs of vertices (or arcs). A {\it tournament} is a directed graph $\Gamma$ with $n$ vertices in which each pair of vertices $x,y\in V$ are joined by exactly one member of $\mathcal{E}$. The in-degree and out-degree of a vertex $x\in V$ are defined to be the number of arcs of the form $yx$, and the number of arcs of the form $xy$, for $y\in V$, respectively. A {\it doubly regular tournament} is a tournament $\Gamma$ on $n$ vertices in which every vertex has in-degree and out-degree $\frac{n-1}{2}$, and each pair of vertices has $\frac{n-3}{4}$ common out-neighbors, and the same number of common in-neighbors. Doubly regular tournaments are closely related to skew conference matrices. Let $C$ be a conference matrix of order $n$, let $S$ be the matrix obtained from $C$ by deleting the first row and column, and let $A$ be the matrix obtained from $S$ by replacing $-1$ by $1$, and $1$ by $0$. If $n \equiv 0 \ ({\rm mod} \ 4)$ then $S$ is skew-symmetric, and $A$ is the adjacency matrix of a doubly regular tournament. Conference matrices of order $n \equiv 0 \ ({\rm mod} \ 4)$ are in fact equivalent to doubly regular tournaments (see \cite{GOETH} and \cite{REID}). Cayley graphs which are doubly regular tournaments are equivalent to skew Hadamard difference sets. For a formulation of doubly regular tournaments in terms of skew Hadamard difference sets, the reader is referred to \cite{STIN} and \cite{DPW}.
\subsection{Adesigns}
Recent interest in almost difference sets and their codes is the main motivation for studying adesigns (almost designs). Let $V$ be a $v$-set and $\mathcal{B}$ a collection of subsets of $V$, called blocks, each having cardinality $k$. If there is a positive integer $\lambda$ such that every $t$-subset of $V$ is incident with either $\lambda$ blocks or with $\lambda+1$ blocks, and $(V,\mathcal{B})$ is not a $t$-design, then $(V,\mathcal{B})$ is called a $t$-$(v,k,\lambda)$ {\it adesign} (or $t$-adesign for short). It is easy to see that a $0$-$1$ matrix $A$ is the incidence matrix of a $2$-$(v,k,\lambda)$ design $(V,\mathcal{B})$ with repetition number $r=r^{\mathcal{B}}$ if and only if \begin{equation}\label{eqAA}
AA^{T}=rI+\lambda(J-I) \text{ and } A^{T}J=kJ,\end{equation} and is the incidence matrix of a $2$-$(v,k,\lambda)$ adesign $(V,\mathcal{B})$ with constant repetition number $r=r^{\mathcal{B}}$ if and only if there exists a $v\times v$ $0$-$1$ matrix $S$, whose diagonal entries are all zero, such that \begin{equation}\label{eqAB}
AA^{T}=rI+\lambda S + (\lambda+1)(J-I-S) \text{ and } A^{T}J=kJ.\end{equation} The following lemma illustrates the relation between almost difference sets and adesigns. The relation is analogous to that between difference sets and $2$-designs. The proof is easy and so is omitted.
\begin{lemma}\label{le1} Let $D$ be a $(v,k,\lambda,s)$ almost difference set in an abelian group $G$. Then $(G,Dev(D))$ is a $2$-$(v,k,\lambda)$ adesign. Moreover, we have \[
r_{\{x,y\}}^{Dev(D)}=\begin{cases}
\lambda,& \text{ if } \ |(D+x)\cap(D+y)|=\lambda,\\
\lambda+1,&\text{ otherwise,}\end{cases}\]for all distinct $x,y \in G$.
\end{lemma}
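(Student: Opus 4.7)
The argument is a direct counting/bijection argument between translates and ordered differences; the main subtlety is bookkeeping with signs, and there is no genuine obstacle.

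Fix distinct $x,y\in G$. First I would determine the repetition number $r_{\{x,y\}}^{Dev(D)}$ by observing that a translate $D+g$ contains both $x$ and $y$ precisely when $x-g\in D$ and $y-g\in D$. Writing $d_{1}=x-g$ and $d_{2}=y-g$, this yields a bijection between the set of $g\in G$ with $\{x,y\}\subseteq D+g$ and the set of ordered pairs $(d_{1},d_{2})\in D\times D$ satisfying $d_{2}-d_{1}=y-x$. Since $x\neq y$, the element $y-x$ is a nonidentity member of $G$ and $d_{1}\neq d_{2}$, so this cardinality is exactly the multiplicity of $y-x$ in the difference multiset $\{*\ d-d' \mid d,d'\in D,\, d\neq d'\ *\}$.

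By the defining property of a $(v,k,\lambda,s)$ almost difference set, every nonidentity element of $G$ appears in this multiset with multiplicity either $\lambda$ or $\lambda+1$. Hence $r_{\{x,y\}}^{Dev(D)}\in\{\lambda,\lambda+1\}$ for every $2$-subset $\{x,y\}\subseteq G$. Since $|Dev(D)|=v$, each block has cardinality $k$, and $(G,Dev(D))$ fails to be a $2$-design whenever $0<s<v-1$, the incidence structure $(G,Dev(D))$ satisfies the definition of a $2$-$(v,k,\lambda)$ adesign.

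For the ``moreover'' part, I would carry out the analogous computation for $|(D+x)\cap(D+y)|$: an element $z$ lies in $(D+x)\cap(D+y)$ iff $z-x\in D$ and $z-y\in D$, so setting $d_{1}=z-x$ and $d_{2}=z-y$ gives $d_{1}-d_{2}=y-x$. Thus $|(D+x)\cap(D+y)|$ also equals the multiplicity of $y-x$ in the same difference multiset, and therefore coincides with $r_{\{x,y\}}^{Dev(D)}$. The dichotomy stated in the lemma is then immediate: $r_{\{x,y\}}^{Dev(D)}=\lambda$ exactly when $|(D+x)\cap(D+y)|=\lambda$, and equals $\lambda+1$ otherwise. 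The only ``hard'' point is tracking the sign convention consistently across the two parametrizations, which is routine abelian-group arithmetic.
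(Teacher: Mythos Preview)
Your argument is correct and is exactly the standard counting argument one would expect; the paper itself omits the proof entirely (``The proof is easy and so is omitted''), so there is nothing further to compare. One minor remark: the adesign conclusion tacitly requires $0<s<v-1$ (otherwise $D$ is an honest difference set and $(G,Dev(D))$ is a $2$-design), which you correctly flagged.
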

\par
Adesigns were first coined by Ding in \cite{CUN}, and several constructions of adesigns and their applications were further investigated in \cite{MDI} and, indirectly in \cite{DYI} and \cite{WW}, as it was shown in \cite{MDI} that almost difference families give $2$-adesigns. It should also be noted that adesigns need not always come from the developments of difference sets or almost difference sets, e.g., there are the duals of quasi-symmetric designs whose block intersection numbers have a difference of one (see Example 5.4 of \cite{MICH00}), as well as those discussed in Example 6.5 of \cite{MDI}. Partial geometric designs, an important type of incidence structure (see \cite{NEUM}) in which one of the defining properties of partial geometries is generalized, were considered in \cite{MICH00}, where an investigation was made into exactly when a $2$-adesign is partial geometric. It was found that, for this to occur, some strong conditions must be satisfied (see Examples 5.4 and 5.5 of \cite{MICH00}). In this paper we will study a special class of $3$-adesigns, i.e., 3-adesigns coming from certain strongly regular graphs and tournaments, and find that these are also $2$-designs. We give several constructions of such $3$-adesigns and we discuss some restrictions on their parameters as well as their links to some other combinatorial objects such as $\lambda$-coverings. Moreover, we construct several new families of $2$-adesigns and discuss some of the restrictions on their parameters.
\par
The remainder of this paper is organized as follows. In Section 2 we make an initial investigation into when a $(t+1)$-adesign is a $t$-design or a $t$-adesign. In Section 3 we give two generic constructions of $3$-adesigns which are balanced incomplete block designs and, furthermore, we discuss the question of when a $3$-adesign is a $2$-design or $2$-adesign. In Section 4 we give some new constructions of $2$-adesigns and we discuss some of the restrictions on their parameters as well. Section 5 concludes the paper with some open problems.
\section{A note on the parameters of $(t+1)$-adesigns which are either $t$-designs or $t$-adesigns}\label{sec2}
It is well-known that $(t+1)$-designs are always $t$-designs (see \cite{BET}). However, it is not clear whether a $(t+1)$-adesign need always be a $t$-design or $t$-adesign. In this section we make a preliminary investigation into when a $(t+1)$-adesign is a $t$-design, or a $t$-adesign, by eliminating some of the possible parameters.
\par
Suppose that $(V,\mathcal{B})$ is a $(t+1)$-$(v,k,\lambda)$ adesign with $b$ blocks. Let $r_{Y}$ denote the number of blocks containing the $t$-subset $Y$ of $V$, and define \[
I_{Y}=\{(z,B)\mid z\in V\setminus Y\text{ and }Y\cup\{z\}\subseteq B\in\mathcal{B}\}.\]We will count $|I_{Y}|$ in two ways. There are $v-t$ ways to choose $z$, and since $(V,\mathcal{B})$ is a $(t+1)$-adesign, neither $|I_{Y}|=\lambda(v-t)$ nor $|I_{Y}|=(\lambda+1)(v-t)$ can hold for all $t$-subsets $Y$ contained in $V$, otherwise $(V,\mathcal{B})$ would be a $(t+1)$-design. Thus $\lambda(v-t)\leq|I_{Y}|\leq(\lambda+1)(v-t)$. We also have $r_{Y}$ ways to choose a block $B$ containing $Y$, and for each choice of $B$, there are $k-t$ ways to choose $z$. This gives us $\lambda(v-t)\leq r_{Y}(k-t)\leq(\lambda+1)(v-t)$ for all possible $t$-subsets $Y$ contained in $V$, whence \begin{equation}\label{eq30}
\lambda\leq r_{Y}\frac{k-t}{v-t}\leq\lambda+1.
\end{equation} Notice that if $\frac{v-t}{k-t}<2$, then \[\left\lceil\lambda\frac{v-t}{k-t}\right\rceil\leq r_{Y}\leq\left\lfloor\lambda\frac{v-t}{k-t}+\frac{v-t}{k-t}\right\rfloor<\left\lceil\lambda\frac{v-t}{k-t}\right\rceil+2,\]so that, as $Y$ runs over the $t$-subsets of $V$, the only possible values for $r_{Y}$ are $\lceil\lambda\frac{v-t}{k-t}\rceil$ or $\lceil\lambda\frac{v-t}{k-t}\rceil+1$. Thus, $(V,\mathcal{B})$ is either a $t$-$(v,k,\lambda')$ adesign with $\lambda'=\lceil\lambda\frac{v-t}{k-t}\rceil$, or a $t$-$(v,k,\lambda')$ design with $\lambda'=\lceil\lambda\frac{v-t}{k-t}\rceil$ or $\lceil\lambda\frac{v-t}{k-t}\rceil+1$. Also notice that if $(V,\mathcal{B})$ is in fact a $t$-design, then by (\ref{eq30}) we must have $\lambda'\frac{k-t}{v-t}-1<\lambda<\lambda'\frac{k-t}{v-t}$ so that $\lambda=\lfloor\lambda'\frac{k-t}{v-t}\rfloor$. Moreover, multiplying through (\ref{eq30}) by $\binom{v}{t+1}/\binom{k}{t+1}$, and taking into account that the inequality must be strict, we have $\lambda\binom{v}{t+1}/\binom{k}{t+1}<\lambda'\binom{v}{t}/\binom{k}{t}<(\lambda+1)\binom{v}{t+1}/\binom{k}{t+1}$ from which it follows that \begin{equation}\label{eq3.1}\lambda\binom{v}{t+1}/\binom{k}{t+1}<b<(\lambda+1)\binom{v}{t+1}/\binom{k}{t+1}.\end{equation}
We have thus shown the following.
\begin{lemma}\label{le30.0} Let $(V,\mathcal{B})$ be a $(t+1)$-$(v,k,\lambda)$ adesign with $b$ blocks. Then for any $t$-subset $Y$ of $V$ we have $\lambda\leq r_{Y}\frac{k-t}{v-t}\leq\lambda+1$. If $\frac{k-t}{v-t}>\frac{1}{2}$ then $(V,\mathcal{B})$ is either a $t$-$(v,k,\lambda')$ adesign with $\lambda'=\lceil\lambda\frac{v-t}{k-t}\rceil$, or a $t$-$(v,k,\lambda')$ design with $\lambda'=\lceil\lambda\frac{v-t}{k-t}\rceil$ or $\lceil\lambda\frac{v-t}{k-t}\rceil+1$. Moreover, if $(V,\mathcal{B})$ is a $t$-$(v,k,\lambda')$ design, then $\lambda=\lfloor\lambda'\frac{k-t}{v-t}\rfloor$ and $\lambda\binom{v}{t+1}/\binom{k}{t+1}<b<(\lambda+1)\binom{v}{t+1}/\binom{k}{t+1}$.
\end{lemma}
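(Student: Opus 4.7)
The plan is a double counting argument on pairs $(z,B)$ for a fixed $t$-subset $Y$. First I would fix an arbitrary $t$-subset $Y \subseteq V$ and consider
\[
I_Y = \{(z,B) : z \in V\setminus Y,\ Y \cup \{z\} \subseteq B \in \mathcal{B}\}.
\]
Counting $|I_Y|$ by summing over $z$ first, each of the $v-t$ choices of $z$ yields a $(t+1)$-subset which, by the $(t+1)$-adesign hypothesis, is contained in either $\lambda$ or $\lambda+1$ blocks, giving $\lambda(v-t)\leq|I_Y|\leq(\lambda+1)(v-t)$. Counting by summing over $B$ first, each of the $r_Y$ blocks containing $Y$ contributes $k-t$ pairs, so $|I_Y|=r_Y(k-t)$. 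Combining yields the first claimed inequality.

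Next, I would exploit the hypothesis $\frac{k-t}{v-t}>\frac{1}{2}$, equivalently $\frac{v-t}{k-t}<2$. The closed interval $[\lambda\frac{v-t}{k-t},(\lambda+1)\frac{v-t}{k-t}]$ then has length less than $2$, so it contains at most two consecutive integers, the smaller of which is $\lceil\lambda\frac{v-t}{k-t}\rceil$. Since $r_Y$ must be an integer lying in this interval for every $Y$, there are only three possibilities: $r_Y$ is constantly $\lceil\lambda\frac{v-t}{k-t}\rceil$, constantly $\lceil\lambda\frac{v-t}{k-t}\rceil+1$, or takes both values as $Y$ varies. The first two correspond to $t$-designs with the stated $\lambda'$, the last to a $t$-adesign with the smaller parameter $\lambda'=\lceil\lambda\frac{v-t}{k-t}\rceil$.

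For the moreover clause, suppose $(V,\mathcal{B})$ is a $t$-design with $r_Y\equiv\lambda'$. I would first observe that $|I_Y|=\lambda'(k-t)$ is then constant in $Y$, and that if $|I_Y|$ equaled either endpoint $\lambda(v-t)$ or $(\lambda+1)(v-t)$, then every $(t+1)$-superset of every $t$-subset would contain the same number of blocks, making $(V,\mathcal{B})$ a $(t+1)$-design and contradicting the adesign hypothesis. The inequality is therefore strict, $\lambda<\lambda'\frac{k-t}{v-t}<\lambda+1$, so $\lambda=\lfloor\lambda'\frac{k-t}{v-t}\rfloor$. The bound on $b$ then follows by multiplying this strict inequality through by $\binom{v}{t+1}/\binom{k}{t+1}$ and using the elementary identity $\binom{v}{t+1}/\binom{k}{t+1}=\frac{v-t}{k-t}\binom{v}{t}/\binom{k}{t}$ together with the standard $t$-design count $b=\lambda'\binom{v}{t}/\binom{k}{t}$.

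I do not anticipate a serious obstacle. The only delicate point is tracking which inequalities must be strict, namely those forced by the assumption that $(V,\mathcal{B})$ is a $(t+1)$-adesign rather than a $(t+1)$-design, and verifying that under $\frac{v-t}{k-t}<2$ the two candidate integer values for $r_Y$ are correctly identified as $\lceil\lambda\frac{v-t}{k-t}\rceil$ and $\lceil\lambda\frac{v-t}{k-t}\rceil+1$.
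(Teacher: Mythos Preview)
Your proposal is correct and follows essentially the same approach as the paper: the same double-counting of $I_Y$, the same use of $\frac{v-t}{k-t}<2$ to pin $r_Y$ down to at most two consecutive integers, and the same derivation of the strict bounds on $b$ via the identity $\binom{v}{t+1}/\binom{k}{t+1}=\frac{v-t}{k-t}\binom{v}{t}/\binom{k}{t}$. If anything, your justification of strictness in the $t$-design case (that a constant $|I_Y|$ hitting an endpoint forces a $(t+1)$-design) is made slightly more explicit than in the paper.
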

It should be noted that the purpose behind stating the inequality in (\ref{eq3.1}) is not to give an estimate on the number of blocks in the $2$-design (since we already know the number of blocks in a $2$-design), but to show the relationship between the number of blocks and $\lambda$, and the resulting strictness of inequality. It should also be noted that, in what follows, it will be made evident that the assumption that $\frac{k-t}{v-t}>\frac{1}{2}$ is sufficient for the $(t+1)$-adesign to be either a $t$-adesign or a $t$-design, but not necessary. The necessary conditions seem difficult to discern, and are left as an open problem. We now give some constructions of $3$-adesigns which are balanced incomplete block designs.
\section{Constructions of $3$-adesigns which are balanced incomplete block designs}\label{sec3}
Both of the constructions in this section follow the same basic method. We take the sets of supports of the rows of adjacency matrices of certain graphs and consider their unions. Note that these are not the first constructions of $3$-adesigns (see Section 6 of \cite{MDI}). However, the constructions in this work are more general, and show definite links between other combinatorial objects such as balanced incomplete block designs and strongly regular graphs.
\par
We first give a construction based on strongly regular graphs. We will need the following lemma, which simply formalizes some of our discussion in Section \ref{sec1.3}.
\begin{lemma}\label{le00} Let $A$ be the adjacency matrix of a strongly regular graph with parameters $(v,k,\lambda,\mu)$. Then \[
A^{2}=kI+\lambda A+\mu (J-I-A) \text{  and  } AJ=kJ.\]
\end{lemma}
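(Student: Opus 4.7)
The plan is to verify the two stated matrix identities entry-by-entry, using the combinatorial meaning of the adjacency matrix. The argument is essentially a direct unpacking of the definition of a strongly regular graph, so no real obstacle is expected; the work is just to make sure all three entry-types match.

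First I would handle $AJ = kJ$. Since the $i$-th row of $A$ is the characteristic vector of the neighborhood of vertex $i$, its sum is the degree of $i$. In a strongly regular graph with parameters $(v,k,\lambda,\mu)$ every vertex has degree $k$, so every row of $A$ sums to $k$, giving $AJ = kJ$ immediately.

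For the first identity I would recall the standard fact that $(A^2)_{ij}$ counts the number of length-two walks from $i$ to $j$, equivalently the number of common neighbors of $i$ and $j$. Then I split into three cases according to the definition of a strongly regular graph: (i) if $i=j$, the common neighbor count is the degree $k$; (ii) if $i \neq j$ and $i \sim j$, it is $\lambda$; (iii) if $i \neq j$ and $i \not\sim j$, it is $\mu$. This gives a complete description of $A^2$.

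Next I would compute the $(i,j)$-entry of the claimed right-hand side $kI + \lambda A + \mu(J - I - A)$ in the same three cases. On the diagonal one gets $k + 0 + \mu(1-1-0) = k$; for an adjacent pair one gets $0 + \lambda + \mu(1-0-1) = \lambda$; for a non-adjacent off-diagonal pair one gets $0 + 0 + \mu(1-0-0) = \mu$. These match the three values of $(A^2)_{ij}$ computed above, so the matrices are equal. Since this covers all entries, both identities are established, and the lemma follows.
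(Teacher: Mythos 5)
Your proof is correct and is exactly the standard entry-by-entry verification; the paper itself omits any proof, stating the lemma as a mere formalization of the definition of a strongly regular graph, so your argument supplies precisely the routine details that were left implicit.
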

Here we give our first construction.
\begin{theorem}\label{th5} Let $A$ be the adjacency matrix of a Paley type strongly regular graph on $n$ vertices, and denote by $A'$ the matrix $J-I-A$. Then $(\mathcal{V}_{A},\mathcal{B}_{A}\cup\mathcal{B}_{A'})$ is a $2$-$(n,\frac{n-1}{2},\frac{n-3}{2})$ design and a $3$-$(n,\frac{n-1}{2},\frac{n-9}{4})$ adesign.
\end{theorem}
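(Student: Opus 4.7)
The plan is to work with the $n \times 2n$ incidence matrix $M = [A \mid A']$ of $(\mathcal{V}_A, \mathcal{B}_A \cup \mathcal{B}_{A'})$ and to exploit two facts: the symmetry of $A$ and $A'$, and the observation that the complement of a Paley type strongly regular graph is itself Paley type, so $A'$ satisfies Lemma \ref{le00} with the same parameters $k = \frac{n-1}{2}$, $\lambda = \frac{n-5}{4}$, $\mu = \frac{n-1}{4}$ as $A$. For the $2$-design claim I would compute $MM^T = A^2 + A'^2$; by Lemma \ref{le00} applied to both graphs, $A^2 + A'^2 = 2kI + \lambda(A+A') + \mu\bigl(2(J-I) - (A+A')\bigr)$, and using $A + A' = J - I$ the $A$-dependence collapses, leaving $MM^T = (n-1)I + \frac{n-3}{2}(J-I)$. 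Since every block is a vertex neighborhood in one of the two graphs and so has size $\frac{n-1}{2}$, identity (\ref{eqAA}) yields the $2$-$(n, \frac{n-1}{2}, \frac{n-3}{2})$ design.

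For the $3$-adesign claim I would count
\[
r_{\{x,y,z\}} = \sum_w A_{wx}A_{wy}A_{wz} + \sum_w A'_{wx}A'_{wy}A'_{wz}
\]
directly. Substituting $A'_{wu} = 1 - A_{wu} - \delta_{wu}$ annihilates the $w \in \{x,y,z\}$ terms of the second sum, and expanding $(1 - A_{wx})(1 - A_{wy})(1 - A_{wz})$ cancels the triple-product terms against the first sum. What remains decomposes into the constant $n-3$, a degree contribution at $x,y,z$ (with corrections for edges inside $\{x,y,z\}$), pairwise common-neighbor counts (each $\lambda$ or $\mu$ depending on adjacency), and a $\binom{a}{2}$ correction counting how often a vertex of the triple is itself a common neighbor of the other two. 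Writing $a$ for the number of edges among $\{x,y,z\}$ in the graph, a short computation yields
\[
r_{\{x,y,z\}} = \frac{n-9}{4} + a - \binom{a}{2},
\]
which equals $\frac{n-9}{4}$ when $a \in \{0,3\}$ and $\frac{n-9}{4} + 1$ when $a \in \{1,2\}$.

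Since every non-degenerate Paley type strongly regular graph contains both an independent triple and a triple spanning exactly one edge, both values are attained, and so the structure is a genuine $3$-$(n, \frac{n-1}{2}, \frac{n-9}{4})$ adesign rather than a $3$-design. The only real technical subtlety is the bookkeeping in the triple-intersection count; everything else is a direct application of Lemma \ref{le00} together with the self-complementary nature of the Paley type parameters.
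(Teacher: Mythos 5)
Your proof is correct, and at bottom it rests on the same two pillars as the paper's: the $2$-design part is verbatim the paper's computation of $A^{2}+(A')^{2}=(n-1)I+\frac{n-3}{2}(J-I)$ via Lemma \ref{le00} and the self-complementarity of the Paley parameters, and the $3$-adesign part is inclusion--exclusion through the complement. The difference is in the bookkeeping: the paper runs a case analysis on the adjacency pattern of $\{x,y,z\}$, passing through the auxiliary block set $\mathcal{B}_{J-A}$ and then correcting for the diagonal rows, while you compress everything into the single identity $r_{\{x,y,z\}}=n-3-3k+3\mu+a-\binom{a}{2}=\frac{n-9}{4}+a-\binom{a}{2}$, where $a$ is the number of edges inside the triple (I verified this; the $\binom{a}{2}$ term correctly counts the vertices of the triple that are common neighbours of the other two, and $a-\binom{a}{2}$ takes the value $0$ for $a\in\{0,3\}$ and $1$ for $a\in\{1,2\}$). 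Your version is in fact slightly more complete than the paper's: the paper's three cases cover only $a=3,2,1$ and silently omit the independent triple $a=0$, and the paper never explicitly checks that both values $\frac{n-9}{4}$ and $\frac{n-5}{4}$ are attained --- which is what rules out the structure being a $3$-design --- whereas you do; your attainment argument needs $n\geq 9$ (equivalently $\lambda\geq 1$, so that triangles and independent triples exist), a hypothesis that is implicit in the statement anyway since $\frac{n-9}{4}$ must be nonnegative. The one loose end, which you share with the paper, is the tacit assumption that $\mathcal{B}_{A}\cup\mathcal{B}_{A'}$ contains no repeated blocks.
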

\begin{proof} For simplicity, let $\mathcal{V}=\mathcal{V}_{A}$, $\mathcal{B}=\mathcal{B}_{A}$ and $\mathcal{B}'=\mathcal{B}_{A'}$, and denote $\frac{n-1}{2}$ by $k$ and $\frac{n-5}{4}$ by $\lambda$. By Lemma \ref{le00} (and using the fact that $A$ is symmetric) it is easy to see that \[
\mb A & A' \me \mb A \\ A' \me = A^{2} + (A')^{2} = (n-1)I+(k-1)(J-I). \]
Then, by (\ref{eqAA}), and the regularity of the graphs, we have that $(\mathcal{V},\mathcal{B}\cup\mathcal{B}')$ is a $2$-$(n,k,k-1)$ design.
\par
Now we want to show that $(\mathcal{V},\mathcal{B}\cup\mathcal{B}')$ is a $3$-adesign. To count the number of blocks of $\mathcal{B}\cup\mathcal{B}'$ in which $x,y$ and $z$ appear together, we first count the number of blocks of $\mathcal{B}\cup\mathcal{\overline{B}}$ in which $x,y$ and $z$ appear together, where $\mathcal{\overline{B}}=\mathcal{B}_{J-A}$. Let $x,y$ and $z$ be distinct members of $\mathcal{V}$. Suppose that $x,y$ and $z$ appear together in $\omega$ blocks of $\mathcal{B}$.
\newline
{\bf Case 1:} Assume that $(A)_{xy}=(A)_{xz}=(A)_{yz}=1$. (In other words, assume that any two of $x,y$ and $z$ appear together in $\lambda$ blocks of $\mathcal{B}$.) Lemma \ref{le00} together with the principle of inclusion and exclusion implies that there are $n-3k+3\lambda-\omega$ blocks in $\mathcal{\overline{B}}$ containing $x,y$ and $z$. Thus, there are $n-3k+3\lambda$ blocks in $\mathcal{B}\cup\mathcal{\overline{B}}$ containing $x,y$ and $z$. We want to know how many of these correspond to rows of $J-A$ whose indices are $x,y$ or $z$. But if any one of the three blocks corresponding to the rows of $J-A$ indexed by $x,y$ and $z$ contains each of the points $x,y$ and $z$, then two of $(A)_{xy},(A)_{xz}$ and $(A)_{yz}$ must be equal to zero, a contradiction. Thus, in this case, $x,y$ and $z$ appear together in exactly $v-3k+3\lambda=\lambda-1$ blocks of $\mathcal{B}\cup\mathcal{B}'$.
\newline
{\bf Case 2:} Assume that $(A)_{xy}=(A)_{xz}=1$ and $(A)_{yz}=0$. By Lemma \ref{le00}, and the principle of inclusion and exclusion, there are $n-3k+3\lambda+1-\omega$ blocks in $\mathcal{\overline{B}}$ containing $x,y$ and $z$, whence $n-3k+\lambda+1$ blocks in $\mathcal{B}\cup\mathcal{\overline{B}}$ containing $x,y$ and $z$. As in the last case, if we suppose that any one of the three blocks corresponding to the rows of $J-A$ indexed by $x,y$ and $z$ contains each of the points $x,y$ and $z$, then two of $(A)_{xy},(A)_{xz}$ and $(A)_{yz}$ must be equal to zero, again leading to a contradiction. Thus, in this case, $x,y$ and $z$ appear together in exactly $n-3k+3\lambda+1=\lambda$ blocks of $\mathcal{B}\cup\mathcal{B}'$.
\newline
{\bf Case 3:} Assume that $(A)_{xy}=1$ and $(A)_{xz}=(A)_{yz}=0$. By Lemma \ref{le00}, and the principle of inclusion and exclusion, there are $n-3k+3\lambda+2-\omega$ blocks in $\mathcal{\overline{B}}$ containing $x,y$ and $z$, whence $n-1-3k+\lambda+2$ blocks in $\mathcal{B}\cup\mathcal{\overline{B}}$ containing $x,y$ and $z$. The only one of the three blocks corresponding to the rows of $J-A$ indexed by $x,y$ and $z$ that contains each of the points $x,y$ and $z$ is that corresponding to $z$ (i.e., the support of the $z$-th row). Thus, in this case, $x,y$ and $z$ appear together in exactly $n-3k+3\lambda+1=\lambda$ blocks of $\mathcal{B}\cup\mathcal{B}'$. Thus $(\mathcal{V},\mathcal{B}\cup\mathcal{B}')$ is a $3$-$(n,k,\lambda-1)$ adesign.
\end{proof}
There is also the following construction, where the union of the complementary block sets is considered. Its proof is similar and so is omitted.
\begin{corollary} Let $A$ be the adjacency matrix of a Paley type strongly regular graph on $n$ vertices, and denote by $A'$ the matrix $J-I-A$. Then $(\mathcal{V}_{A},\mathcal{B}_{A+I}\cup\mathcal{B}_{A'+I})$ is a $2$-$(n,\frac{n+1}{2},\frac{n-1}{2})$ design and a $3$-$(n,\frac{n-1}{2},\frac{n-1}{4})$ adesign.
\end{corollary}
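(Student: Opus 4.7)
The plan is to mirror the proof of Theorem~\ref{th5}, with $A+I$ and $A'+I$ playing the roles of $A$ and $A'$. Write $k = \frac{n-1}{2}$, $\lambda = \frac{n-5}{4}$, and $\mu = \frac{n-1}{4}$ for the Paley parameters, and note that $A+I$ and $A'+I = J-A$ are both symmetric with constant row sum $\frac{n+1}{2}$, so that the incidence matrix of the proposed design, obtained by concatenating these two matrices, automatically satisfies the column-sum condition in (\ref{eqAA}).

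For the 2-design statement, I would expand
\begin{equation*}
(A+I)(A+I)^{T} + (A'+I)(A'+I)^{T} = (A+I)^{2} + (J-A)^{2}
\end{equation*}
using the identity $A^{2} = kI + \lambda A + \mu(J-I-A)$ from Lemma~\ref{le00} along with $JA=AJ=kJ$. A routine substitution shows that the coefficients of $A$ in $(A+I)^{2}$ and in $(J-A)^{2}$ are opposites and thus cancel, leaving a matrix of the form $rI+\lambda'(J-I)$ for suitable $r,\lambda'$ expressible in the Paley parameters. The characterization in (\ref{eqAA}) then identifies $(\mathcal{V}_{A},\mathcal{B}_{A+I}\cup\mathcal{B}_{A'+I})$ as a 2-design with the stated parameters.

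For the 3-adesign statement, I would fix three distinct points $x,y,z \in \mathcal{V}_{A}$ and count the blocks of $\mathcal{B}_{A+I}\cup\mathcal{B}_{A'+I}$ containing all three. The key observation is that the $i$-th block of $\mathcal{B}_{A+I}$ is the closed neighborhood $\{i\}\cup N(i)$ of vertex $i$, while the $i$-th block of $\mathcal{B}_{A'+I}$ is $V\setminus N(i)$. As in Theorem~\ref{th5}, the count depends only on the subgraph of the Paley graph induced on $\{x,y,z\}$, so I would split into four cases according to how many edges of the graph lie among $\{x,y,z\}$ (three, two, one, or none), and in each case use inclusion--exclusion together with Lemma~\ref{le00} to count the blocks from $\mathcal{B}_{A+I}$ and from $\mathcal{B}_{A'+I}$ containing $\{x,y,z\}$ separately, then sum.

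The main obstacle will be bookkeeping. Relative to Theorem~\ref{th5}, adding $I$ alters which of the rows indexed by $x,y,z$ themselves contribute a block containing the full triple, and this shifts several subtotals by one; these contributions must be tracked case by case. The essential verification is that the four cases collectively produce exactly two values, $\frac{n-1}{4}$ and $\frac{n-1}{4}+1$, with both actually occurring, so that the resulting structure is a genuine 3-adesign rather than a 3-design.
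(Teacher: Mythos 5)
Your plan is exactly the one the paper intends -- the authors omit the proof because it ``is similar'' to that of Theorem~\ref{th5} -- and both halves of your outline do go through. The problem is that you never carry out the computation you describe, and if you did you would find that it does \emph{not} reproduce ``the stated parameters.'' With $k=\frac{n-1}{2}$, $\lambda=\frac{n-5}{4}$, $\mu=\frac{n-1}{4}$, Lemma~\ref{le00} gives $A^{2}=\frac{n-1}{4}I-A+\frac{n-1}{4}J$, hence
\[
(A+I)^{2}+(J-A)^{2}=\left(\tfrac{n+3}{4}I+A+\tfrac{n-1}{4}J\right)+\left(\tfrac{n-1}{4}I-A+\tfrac{n+3}{4}J\right)=(n+1)I+\tfrac{n+1}{2}(J-I),
\]
so every pair of points lies in $\frac{n+1}{2}$ blocks, not $\frac{n-1}{2}$ as printed in the corollary. (This is also forced by $r(k-1)=\lambda'(v-1)$ with $r=n+1$ and block size $\frac{n+1}{2}$; for $n=5$ the blocks are all ten $3$-subsets of a $5$-set, a $2$-$(5,3,3)$ design, and $\frac{n+1}{2}=3$.) Likewise the block size appearing in the $3$-adesign parameters should read $\frac{n+1}{2}$, since the rows of $A+I$ and $J-A$ each have $\frac{n+1}{2}$ ones. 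These are misprints in the statement rather than flaws in your method, but a proof that asserts it arrives at the printed parameters cannot be correct as written; you should derive and state the corrected values.

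For the triple counts your four-case plan is sound and the two values $\frac{n-1}{4}$ and $\frac{n+3}{4}$ do come out. You can shortcut the bookkeeping you are worried about by noting that each block here is the corresponding block of Theorem~\ref{th5} with its row index adjoined; thus $r_{\{x,y,z\}}$ exceeds the count from Theorem~\ref{th5} by the number of $i\in\{x,y,z\}$ adjacent (for $\mathcal{B}_{A+I}$) or non-adjacent (for $\mathcal{B}_{A'+I}$) to the other two, which is $3$ when the triple induces $0$ or $3$ edges and $1$ when it induces $1$ or $2$ edges; adding these to the values $\lambda-1$ and $\lambda$ from Theorem~\ref{th5} gives $\frac{n+3}{4}$ and $\frac{n-1}{4}$ respectively. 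Finally, for ``both values actually occur'' you need $n\geq 9$, so that the graph contains both a triangle (here $\lambda\geq 1$) and an induced triple with one or two edges; at $n=5$ every triple lies in exactly one block and the structure is a $3$-design, not a $3$-adesign.
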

\begin{example}\label{ex00} If $C$ is a conference matrix of order $n \equiv 2 \ ({\rm mod} \ 4)$, then let $S$ be the matrix obtained from $C$ by deleting the first row and column, and let $A$ be the matrix obtained from $S$ by replacing $-1$ by $1$, and $1$ by $0$. Then $A$ is the adjacency matrix of a strongly regular graph of Paley type (see Section \ref{sec1.3}) and, by Theorem \ref{th5}, $(\mathcal{V}_{A},\mathcal{B}_{A}\cup\mathcal{B}_{A'})$ is a $2$-$(n-1,\frac{n-2}{2},\frac{n-4}{2})$ design and a $3$-$(n-1,\frac{n-2}{2},\frac{n-10}{2})$ adesign.
\end{example}
We now consider a construction based on tournaments. We will need the following lemma, which is also a mere formalization of part of the discussion in Section \ref{sec1.3}.
\begin{lemma}\label{le01} {\rm \cite{GOETH}} Let $A$ be the adjacency matrix of a tournament $\Gamma$ on $n$ vertices, and denote by $S$ the matrix $2A+I-J$. Then $\Gamma$ is doubly regular if and only if \[SS^{T}=nI-J.\]
\end{lemma}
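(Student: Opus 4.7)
The plan is to leverage a single algebraic observation: the tournament identity $A + A^T = J - I$ forces $S = 2A + I - J$ to be skew-symmetric (one checks directly that $S + S^T = 2(A+A^T) + 2I - 2J = 0$), so that $S^T = -S$ and consequently $SS^T = -S^2$. Both directions of the equivalence will reduce to bookkeeping built around this fact.

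For the forward direction I would assume $\Gamma$ is doubly regular, so that $AJ = JA = \tfrac{n-1}{2}J$ and, since $(AA^T)_{ij}$ counts common out-neighbors of $i$ and $j$, $AA^T = \tfrac{n-1}{2}I + \tfrac{n-3}{4}(J - I)$. Expanding $SS^T = (2A + I - J)(2A^T + I - J)$, substituting these relations together with $A + A^T = J - I$ and $J^2 = nJ$, and simplifying gives $nI - J$ after a routine calculation.

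For the reverse direction I would first deduce regularity from $SS^T = nI - J$. Computing $\mathbf{1}^T SS^T \mathbf{1}$ two ways gives $\|S^T \mathbf{1}\|^2 = \mathbf{1}^T(nI - J)\mathbf{1} = n \cdot n - n^2 = 0$, hence $S^T \mathbf{1} = 0$; skew-symmetry then yields $S\mathbf{1} = 0$. Since the $i$-th row sum of $S$ equals $2 d_i^{\mathrm{out}} - (n-1)$, this forces $d_i^{\mathrm{out}} = d_i^{\mathrm{in}} = \tfrac{n-1}{2}$ for every vertex. With regularity in hand, skew-symmetry again gives $S^2 = -SS^T = J - nI$; expanding $(2A + I - J)^2$ and using $AJ = JA = \tfrac{n-1}{2}J$ solves for $A^2$, and then the identity $AA^T = \tfrac{n-1}{2}J - A - A^2$ (coming from $A^T = J - I - A$) shows that every off-diagonal entry of $AA^T$ equals $\tfrac{n-3}{4}$, i.e., each pair has $\tfrac{n-3}{4}$ common out-neighbors. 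A short partition-counting argument in the regular tournament, partitioning the remaining $n-2$ vertices into the four cells $N^{\pm}(i) \cap N^{\pm}(j)$ and solving the resulting linear system, then forces the common in-neighbor count also to be $\tfrac{n-3}{4}$.

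The only step that is at all delicate is the transition $S^T \mathbf{1} = 0 \Rightarrow S\mathbf{1} = 0$, which hinges entirely on the skew-symmetry of $S$; once that has been cleanly recorded at the outset, the remainder is purely computational.
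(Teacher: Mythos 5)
Your argument is correct: the skew-symmetry $S^T=-S$ (from $A+A^T=J-I$) does carry both directions, the expansion of $(2A+I-J)(2A^T+I-J)$ does simplify to $nI-J$ under double regularity, and in the converse the norm computation $\|S^T\mathbf{1}\|^2=\mathbf{1}^T(nI-J)\mathbf{1}=0$ legitimately forces regularity, after which $4A^2=(n+1)(J-I)-4A$ and $AA^T=\frac{n-1}{2}J-A-A^2=\frac{n+1}{4}I+\frac{n-3}{4}J$ give the common out-neighbor count (and $A^TA=(J-I-A)A$ yields the same matrix, so the partition-counting step for in-neighbors, while fine, could be replaced by one line). The paper itself offers no proof of this lemma, citing Goethals--Seidel instead, so there is no in-paper argument to compare against; your verification is the standard one and is complete.
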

Here we give the construction.
\begin{theorem}\label{th6} Let $A$ be the adjacency matrix of a doubly regular tournament on $n$ vertices, and denote by $A'$ the matrix $J-I-A$. Then $(\mathcal{V}_{A},\mathcal{B}_{A}\cup\mathcal{B}_{A'})$ is a $2$-$(n,\frac{n-1}{2},\frac{n-3}{2})$ design and a $3$-$(n,\frac{n-1}{2},\frac{n-7}{4})$ adesign.
\end{theorem}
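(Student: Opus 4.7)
The plan is to mirror the strategy of Theorem~\ref{th5}. A feature specific to tournaments makes things slightly cleaner: since $A+A^T=J-I$, the matrix $A'=J-I-A$ is literally $A^T$, so the blocks of $\mathcal{B}_{A'}$ are precisely the in-neighborhoods of vertices and those of $\mathcal{B}_A$ the out-neighborhoods.

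For the $2$-design claim, I would establish
\[
AA^T+A'(A')^T=AA^T+A^TA=(n-1)I+\tfrac{n-3}{2}(J-I).
\]
By double regularity, $(AA^T)_{xy}$ (the number of common out-neighbors of $x,y$) equals $(n-1)/2$ if $x=y$ and $(n-3)/4$ otherwise, and $A^TA$ has identical entries counting common in-neighbors. Alternatively, this identity can be derived from Lemma~\ref{le01}: since $S=2A+I-J=A-A^T$ and $SJ=0$, the relation $SS^T=nI-J$ expands to the same conclusion. Combined with $(A')^TJ=A^TJ=\tfrac{n-1}{2}J$ and equation~\eqref{eqAA}, this gives the $2$-$(n,\tfrac{n-1}{2},\tfrac{n-3}{2})$ design.

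For the $3$-adesign part, the key observation is that a block of $\mathcal{B}_A$ (resp.\ $\mathcal{B}_{A'}$) indexed by $w$ contains $\{x,y,z\}$ iff $w$ is a common in-neighbor (resp.\ out-neighbor) of the three points, so
\[
r^{\mathcal{B}_A\cup\mathcal{B}_{A'}}_{\{x,y,z\}}=\alpha+\beta,
\]
where $\alpha,\beta$ count common in- and out-neighbors of $\{x,y,z\}$ in $V$. I would partition $V\setminus\{x,y,z\}$ into the eight classes $n_{ijk}$ indexed by the arc-pattern of such a $w$ towards $(x,y,z)$ (bit $i=1$ meaning $w\to x$, etc.), so that $\alpha=n_{111}$ and $\beta=n_{000}$. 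Double regularity then supplies six pair-equations of the form
\[
n_{110}+n_{111}=\tfrac{n-3}{4}-\delta,
\]
and analogously for the two other in-pairs and the three out-pairs, where $\delta\in\{0,1\}$ is a correction recording whether the third vertex of $\{x,y,z\}$ is itself a common in-/out-neighbor of the remaining two. Together with $\sum_{i,j,k}n_{ijk}=n-3$, this system pins down $n_{000}+n_{111}$.

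The main (and really only) technical step is a short case analysis on the isomorphism class of the sub-tournament induced on $\{x,y,z\}$. If the triple is a $3$-cycle, all six corrections $\delta$ vanish and routine linear algebra yields $\alpha+\beta=(n-3)/4$; if the triple is transitive (one source and one sink) then exactly two of the six corrections equal $1$, and the analogous computation gives $\alpha+\beta=(n-7)/4$. Hence every triple meets either $\lambda=(n-7)/4$ or $\lambda+1$ blocks. Since for $n\geq 11$ the standard counts $n\binom{(n-1)/2}{2}=n(n-1)(n-3)/8$ of transitive triples and $\binom{n}{3}-n\binom{(n-1)/2}{2}=n(n-1)(n+1)/24$ of cyclic triples are both positive, both values are attained, so $(\mathcal{V}_A,\mathcal{B}_A\cup\mathcal{B}_{A'})$ is a genuine $3$-$(n,\tfrac{n-1}{2},\tfrac{n-7}{4})$ adesign rather than a $3$-design.
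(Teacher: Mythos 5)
Your proof is correct, and while your two-design half coincides in substance with the paper's (both rest on the double-regularity counts of common in- and out-neighbours, equivalently on Lemma~\ref{le01}; your observation that $A'=A^{T}$, so that $NN^{T}=A^{T}A+AA^{T}$ for the incidence matrix $N=\begin{pmatrix}A^{T} & A\end{pmatrix}$, is a clean simplification the paper does not exploit), your three-adesign half takes a genuinely different route. The paper argues indirectly: by inclusion--exclusion on $\mathcal{B}_{A}\cup\mathcal{B}_{J-A}$ and the observation that at most one of the rows of $J-A$ indexed by $x,y,z$ can contribute, it shows $r_{\{x,y,z\}}\in\{\frac{n-7}{4},\frac{n-3}{4}\}$, and then rules out a $3$-design by checking that the block-count identity $2n=\lambda'\binom{n}{3}/\binom{k}{3}$ fails for both candidate values of $\lambda'$. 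You instead compute $r_{\{x,y,z\}}=n_{111}+n_{000}$ exactly: summing your six pair-equations against $\sum_{i,j,k}n_{ijk}=n-3$ gives $n_{111}+n_{000}=\frac{n-3}{4}-\frac{1}{2}\sum\delta$, and your case analysis is right ($\sum\delta=0$ for a cyclic triple, $\sum\delta=2$ for a transitive one, since exactly the source and the sink of a transitive triple dominate, respectively are dominated by, the other two). This buys an explicit description of \emph{which} triples meet $\frac{n-3}{4}$ blocks and which meet $\frac{n-7}{4}$, and your count of cyclic and transitive triples replaces the paper's arithmetic contradiction as the proof that the structure is not a $3$-design; your restriction $n\ge 11$ is in fact exactly the range in which $\lambda=\frac{n-7}{4}$ is a positive integer, so nothing is lost there. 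In short: same skeleton, but a more local and more informative argument for the adesign property.
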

\begin{proof} Let $\Gamma=(V,\mathcal{E})$ be the tournament with adjacency matrix $A$. Again, for simplicity, let $\mathcal{V}=\mathcal{V}_{A}$, $\mathcal{B}=\mathcal{B}_{A}$, $\mathcal{B}'=\mathcal{B}_{A'}$, and $\overline{\mathcal{B}}=\mathcal{B}_{J-A}$, and denote $\frac{n-1}{2}$ by $k$ and $\frac{n-3}{4}$ by $\lambda$. The fact that $r_{\{x,y\}}^{\mathcal{B}'}$ is the constant $\lambda$ for all $x,y\in \mathcal{V}$ implies that $(\mathcal{V},\mathcal{B}\cup\mathcal{B}')$ is a $2$-$(n,k,2\lambda)$ design.
\par
We need to show that $(\mathcal{V},\mathcal{B}\cup\mathcal{B}')$ is a $3$-adesign. Like in the previous construction, we will assume that $x,y$ and $z$ appear together in $\omega$ blocks of $\mathcal{B}$, and we will first count the number of blocks of $\mathcal{B}\cup\mathcal{\overline{B}}$ in which $x,y$ and $z$ appear together. By the principle of inclusion and exclusion, there are $n-3k+3\lambda-\omega$ blocks in $\mathcal{\overline{B}}$ containing $x,y$ and $z$. Then there are $n-3k+3\lambda$ blocks in $\mathcal{B}\cup\mathcal{\overline{B}}$ containing $x,y$ and $z$. We want to know how many of these correspond to the rows of $J-A$ whose indices are $x,y$ or $z$. Notice if we suppose that the two blocks corresponding to the rows of $J-A$ indexed by $x$ and $y$ both contain each of the points $x,y$ and $z$, then $\mathcal{E}$ must contain both of the arcs $xy$ and $yx$, a contradiction to Lemma \ref{le01}. Thus, no more than one of the three blocks corresponding to the rows of $J-A$ indexed by $x,y$ and $z$ can contain each of the points $x,y$ and $z$. We need only show that $(\mathcal{V},\mathcal{B}\cup\mathcal{B}')$ is not a $3$-design, and we will be done. If $(\mathcal{V},\mathcal{B}\cup\mathcal{B}')$ were a $3$-design, then, by the above arguments, the only choices for the constant $r_{\{x,y,z\}}^{\mathcal{B}\cup\mathcal{B}'}(=:\lambda')$ would be $\lambda$ or $\lambda-1$. The number of blocks in $\mathcal{B}\cup\mathcal{B}'$ is given by $\lambda'\binom{n}{3}/\binom{k}{3}$, whence the equation \begin{equation}\label{eq8} 2n=\lambda'\binom{n}{3}/\binom{k}{3}\end{equation} must hold. If $\lambda'=\lambda$ then (\ref{eq8}) becomes $n=n+3$, a contradiction, and if $\lambda'=\lambda-1$ then (\ref{eq8}) becomes $(k-1)(k-2)=(\lambda-1)(n-2)$, which again leads to a contradiction.
\end{proof}
In the following, the union of the complementary block sets is considered. The proof is similar and so is omitted.
\begin{corollary} Let $A$ be the adjacency matrix of a doubly regular tournament on $n$ vertices, and denote by $A'$ the matrix $J-I-A$. Then $(\mathcal{V}_{A},\mathcal{B}_{A+I}\cup\mathcal{B}_{A'+I})$ is a $2$-$(n,\frac{n+1}{2},\frac{n+1}{2})$ design and a $3$-$(n,\frac{n+1}{2},\frac{n-3}{4})$ adesign.
\end{corollary}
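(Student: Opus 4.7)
The plan is to adapt the argument of Theorem~\ref{th6}, with the one new feature that each block in the new construction contains its indexing vertex. Set $k=\frac{n+1}{2}$ and $\lambda=\frac{n-3}{4}$. Each row of $A+I$ has support $\{i\}\cup N^+(i)$ and each row of $A'+I=J-A$ has support $\{i\}\cup N^-(i)$, so every block has size $k$ and there are $b=2n$ blocks. Using $A+A^T=J-I$, $JA=AJ=\frac{n-1}{2}J$, and (as a consequence of Lemma~\ref{le01}) $A^TA=\frac{n+1}{4}I+\lambda J$, one computes
\[
(A+I)^T(A+I)+(J-A)^T(J-A)=2A^TA+(A+A^T)+I+J=\frac{n+1}{2}(I+J).
\]
Reading off the off-diagonal entries confirms that $(\mathcal{V}_A,\mathcal{B}_{A+I}\cup\mathcal{B}_{A'+I})$ is a $2$-$(n,\frac{n+1}{2},\frac{n+1}{2})$ design.

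For the $3$-adesign claim, I would fix distinct $x,y,z\in\mathcal{V}_A$ and set $\omega=|N^-(x)\cap N^-(y)\cap N^-(z)|$. Among blocks of $\mathcal{B}_{A+I}$ containing $\{x,y,z\}$, those indexed by $i\notin\{x,y,z\}$ number $\omega$, while for each $i\in\{x,y,z\}$ the block contains $\{x,y,z\}$ precisely when $i$ dominates the other two vertices in the subtournament on $\{x,y,z\}$; write $a_i\in\{0,1\}$ for this indicator and $\delta=a_x+a_y+a_z$. For $\mathcal{B}_{A'+I}=\mathcal{B}_{J-A}$ the block $\{i\}\cup N^-(i)$ contains $\{x,y,z\}$ iff $\{x,y,z\}\cap N^+(i)=\emptyset$; inclusion-exclusion (using $|N^-(a)|=\frac{n-1}{2}$ and $|N^-(a)\cap N^-(b)|=\lambda$) counts these as $n-\frac{3(n-1)}{2}+3\lambda-\omega=\lambda-\omega$. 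Summing gives $r_{\{x,y,z\}}=\lambda+\delta$, and since any $3$-vertex induced subtournament is either a directed $3$-cycle (every vertex of out-degree $1$ there, so $\delta=0$) or transitive (a unique source of out-degree $2$, so $\delta=1$), this count lies in $\{\lambda,\lambda+1\}$.

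Finally, I would rule out the constant (i.e.\ $3$-design) case exactly as in Theorem~\ref{th6}: if $r_{\{x,y,z\}}$ were the constant $\lambda'$, then $b=\lambda'\binom{n}{3}/\binom{k}{3}$ forces $\lambda'=\frac{(n+1)(n-3)}{4(n-2)}$, and equating this to $\lambda$ or to $\lambda+1$ yields $n+1=n-2$ or $n-3=n-2$, respectively, both absurd. Hence $(\mathcal{V}_A,\mathcal{B}_{A+I}\cup\mathcal{B}_{A'+I})$ is a $3$-$(n,\frac{n+1}{2},\lambda)$ adesign. The main subtlety is isolating the diagonal contribution $\delta$: in Theorem~\ref{th6} neither $\mathcal{B}_A$ nor $\mathcal{B}_{A'}$ contains its indexing vertex, so only the inclusion-exclusion term survives, whereas here both $\mathcal{B}_{A+I}$ and $\mathcal{B}_{A'+I}$ retain the index, and the resulting ``$i\in\{x,y,z\}$'' contributions must be extracted via the induced-subtournament case split before one can see that the triple coverage varies by exactly one.
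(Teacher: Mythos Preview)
Your proof is correct and follows essentially the approach the paper has in mind (the paper omits the proof, citing similarity to Theorem~\ref{th6}). Your organization differs only cosmetically from the model argument: for the $2$-design part you use a matrix identity (closer to Theorem~\ref{th5}) rather than the direct pair-count in Theorem~\ref{th6}, and for the $3$-adesign part you add the diagonal contributions $\delta$ directly via the cyclic/transitive dichotomy on the induced $3$-subtournament, whereas Theorem~\ref{th6} first computes $\mathcal{B}\cup\overline{\mathcal{B}}$ and then argues that at most one of the three diagonal rows of $J-A$ can contain $\{x,y,z\}$; both routes encode the same combinatorics and your version is arguably cleaner here since both block families contain their indexing vertex.
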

\begin{example}\label{ex01} If $C$ is a conference matrix of order $n \equiv 0 \ ({\rm mod} \ 4)$, then let $S$ be the matrix obtained from $C$ by deleting the first row and column, and let $A$ be the matrix obtained from $S$ by replacing $-1$ by $1$, and $1$ by $0$. Then $A$ is the adjacency matrix of a doubly regular tournament (see Section \ref{sec1.3}) and, by Theorem \ref{th6}, $(\mathcal{V}_{A},\mathcal{B}_{A}\cup\mathcal{B}_{A'})$ is a $2$-$(n-1,\frac{n-2}{2},\frac{n-4}{2})$ design and a $3$-$(n-1,\frac{n-2}{2},\frac{n-8}{4})$ adesign.
\end{example}
It is clear that the balanced incomplete block designs resulting from the $3$-adesigns constructed from partial difference sets in Example \ref{ex00}, and from skew Hadamard difference sets in Example \ref{ex01}, can also be realized as difference families, each consisting of two distinct difference sets (or almost difference sets). To the best of our knowledge, the only instances of these that have been reported on previously are those constructed via cyclotomy, discussed by Wilson in \cite{WIL} as difference families, and also by Liu and Ding in \cite{LIU} as balanced incomplete block designs.
\par
Assuming that $(V,\mathcal{B})$ is a $2$-$(v,k,\lambda')$ design, the condition $\lambda=\lfloor\lambda'\frac{k-2}{v-2}\rfloor$ stated in Lemma \ref{le30.0} is necessary for $(V,\mathcal{B})$ to be a $3$-$(v,k,\lambda)$ adesign, but not sufficient, as the following example, whose construction method was discussed in {\rm \cite{MDI}}, illustrates.
\begin{example} Let $n$ be an odd integer divisible by $3$. Consider, for fixed $a\in\mathbb{Z}_{n}$, all pairs $\{a-i \ ({\rm mod} \ n),a+i \ ({\rm mod} \ n)\}$, for $i=1,...,\frac{n-1}{2}$. The union of any two distinct pairs gives a block consisting of four points. Denote, for fixed $a\in\mathbb{Z}_{n}$, the set of all blocks obtained in this way by $\mathcal{B}_{a}$. Then $(\mathbb{Z}_{n},\cup_{a\in\mathbb{Z}_{n}}\mathcal{B}_{a})$ is a $2$-$(n,4,n)$ design. Also notice that $\lfloor n\frac{2}{n-2}\rfloor=2$ for all $n\geq 9$, and the number of blocks is $b=n\binom{(n-1)/2}{2}$ so that $2\binom{n}{3}/\binom{4}{3}<b<3\binom{n}{3}/\binom{4}{3}$ is satisfied; however, since $n$ is divisible by 3, we can find $3$-subsets of $\mathbb{Z}_{n}$ not contained in any block (choose three points $x,y$ and $z$ so that $|x-y|=|x-z|=|y-z|$).
\end{example}
\section{Constructions of $2$-adesigns}
We begin this section with a discussion on the possible number of blocks of $2$-adesigns.
\subsection{Possible number of blocks of $2$-adesigns}\label{secB}
Let $(V,\mathcal{B})$ be a $2$-$(v,k,\lambda)$ adesign with $b$ blocks. According to Lemma \ref{le30.0}, if $(V,\mathcal{B})$ is a tactical configuration, then $\lambda=\lfloor r^{\mathcal{B}}\frac{k-1}{v-1}\rfloor$ and \[
\lambda\binom{v}{2}/\binom{k}{2}<b<(\lambda+1)\binom{v}{2}/\binom{k}{2}.
\] Let $v,k$ and $\lambda$ be positive integers and let $(V,\mathcal{B})$ be an incidence structure with $|V|=v$ and $|B|=k$ for all $B\in\mathcal{B}$. If each pair of points occurs in at least $\lambda$ blocks, then $(V,\mathcal{B})$ is a $(v,k,\lambda)$-{\it covering}. If each pair of points occurs in at most $\lambda$ blocks then $(V,\mathcal{B})$ is a $(v,k,\lambda)$-{\it packing}. The classical bound for coverings is the Schonheim bound \cite{SCHON}, which states that, if $\mathcal{B}$ is a $\lambda$-covering with $b$ blocks, then \[b\geq C_{\lambda}\text{ where }C_{\lambda}:=\left\lceil\frac{v}{k}\left\lceil\frac{\lambda (v-1)}{k-1}\right\rceil\right\rceil,\] and the classical bound for packings is the Johnson bound \cite{JOHN}, and states that, if $\mathcal{B}$ is a $\lambda$-packing with $b$ blocks, then \[b\leq P_{\lambda}\text{ where }P_{\lambda}:=\left\lfloor\frac{v}{k}\left\lfloor\frac{\lambda (v-1)}{k-1}\right\rfloor\right\rfloor.\]
\par
In \cite{HOR}, Horsely showed that, in certain situations, these bounds could be improved. Denote the incidence matrix of a $(v,k,\lambda)$-covering resp. -packing by $M_{c}$ resp. $M_{p}$. Also note that, if $b$ is the number of blocks in $\mathcal{B}$, then \[
b\geq{\rm rank}(M)\geq{\rm rank}(MM^{T})\]where $M$ is either $M_{c}$ or $M_{p}$.
\begin{lemma}\label{le4} {\rm \cite{HOR}} Let $v,k$ and $\lambda$ be positive integers such that $3\leq k<v$, and let $r$ and $d$ be the integers such that $\lambda(v-1)=r(k-1)-d$ and $0\leq d<k-1$. If $d<r-\lambda$, then \[
{\rm rank}(M_{c}M_{c}^{T})\geq C'_{\lambda}(r,d)\text{ where }C'_{\lambda}(r,d):=\left\lceil\frac{v(r+1)}{k+1}\right\rceil.\]
\end{lemma}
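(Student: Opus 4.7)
The object of study is the $v\times b$ incidence matrix $M=M_{c}$ of a $(v,k,\lambda)$-covering, and the goal is to show that its row space has dimension at least $\lceil v(r+1)/(k+1)\rceil$. My plan is to identify enough linear algebraic structure in $MM^{T}$, together with the combinatorial constraints that the covering inequalities place on the replication numbers and pair coincidences, to rule out the rank being that of an ``extremal'' covering saturating the Sch\"onheim bound; the strict inequality $d<r-\lambda$ is what makes such an extremal configuration impossible.

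\textbf{Setup.} For each point $i$ denote by $r_{i}$ the replication number and for each pair $\{i,j\}$ by $\lambda_{ij}\geq\lambda$ the pair coincidence. Double counting gives
\[
(k-1)r_{i}\;=\;\sum_{j\ne i}\lambda_{ij}\;\geq\;(v-1)\lambda,
\]
so $r_{i}\geq r$, and the point excess $e_{i}:=(k-1)r_{i}-(v-1)\lambda$ satisfies $e_{i}\equiv d\pmod{k-1}$, with $e_{i}=d$ exactly when $r_{i}=r$. In addition $\lambda_{ij}\le\min(r_{i},r_{j})$, so the deviations $\epsilon_{ij}:=\lambda_{ij}-\lambda\geq 0$ are constrained both row-wise (summing to $e_{i}$) and entry-wise (at most $r_{i}-\lambda$). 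The matrix $MM^{T}$ is positive semidefinite with $(MM^{T})_{ii}=r_{i}$ and $(MM^{T})_{ij}=\lambda_{ij}$, and $\mathrm{rank}(MM^{T})=\mathrm{rank}(M)$.

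\textbf{Main step.} I would begin from the standard PSD rank inequality
\[
\mathrm{rank}(MM^{T})\;\geq\;\frac{(\mathrm{tr}\,MM^{T})^{2}}{\mathrm{tr}\,(MM^{T})^{2}}\;=\;\frac{\bigl(\sum_{i}r_{i}\bigr)^{2}}{\sum_{i}r_{i}^{2}+\sum_{i\ne j}\lambda_{ij}^{2}},
\]
and push it further by exploiting $\lambda_{ij}\le r_{i}$ to upper bound the denominator in terms of $\sum_{i}r_{i}^{2}$ alone. Once this is in place, one invokes the excess relations $\sum_{j}\epsilon_{ij}=e_{i}$ and $e_{i}\equiv d\pmod{k-1}$ to force $\sum_{i}r_{i}$ to grow beyond the naive bound $vr$ by an amount controlled by the arithmetic hypothesis. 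Translating the resulting improvement into a lower bound for the quotient is precisely what replaces the Sch\"onheim denominator $k$ by $k+1$ and produces $\lceil v(r+1)/(k+1)\rceil$. The role of $d<r-\lambda$ is that, in any hypothetical extremal configuration with as many replication numbers equal to $r$ as possible, every $\epsilon_{ij}$ incident to such a point must satisfy $0\leq\epsilon_{ij}\leq d<r-\lambda$; a counting argument then shows that this does not leave enough room to accommodate the total excess demanded by $\sum_{i}e_{i}$ unless additional points have $r_{i}\geq r+1$, and this is the mechanism by which the ``$+1$'' appears in both numerator and denominator.

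\textbf{Main obstacle.} The hard part is not the positive semidefinite rank inequality itself, which is routine, but the passage from a trace-type lower bound to the clean quotient $v(r+1)/(k+1)$. I expect the actual proof to proceed via a carefully chosen rank-one perturbation $MM^{T}+\alpha J$ (or an augmented matrix $[M\mid \mathbf{1}/\sqrt{\alpha}]$) engineered so that the perturbed Gram matrix behaves as the Gram matrix of an incidence structure with block size $k+1$ and replication $r+1$, to which the Sch\"onheim bound can be applied directly; the inequality $d<r-\lambda$ should be precisely the condition that legitimizes this choice of $\alpha$ and prevents the perturbation from destroying positivity. Making this perturbation explicit and verifying the kernel is trivial is the step where all the arithmetic in $r$, $d$ and $\lambda$ is used.
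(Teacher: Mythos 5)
This lemma is not proved in the paper at all: it is quoted verbatim from Horsley \cite{HOR}, so there is no in-paper argument to compare against. Judged on its own, your proposal has a genuine gap at its quantitative core. The starting point, the positive-semidefinite ratio bound $\mathrm{rank}(MM^{T})\geq(\mathrm{tr}\,MM^{T})^{2}/\mathrm{tr}\,(MM^{T})^{2}$, is provably too weak to reach the stated conclusion: already in the ideal case of a $2$-$(v,k,\lambda)$ design, where $MM^{T}=(r-\lambda)I+\lambda J$ has full rank $v$, the ratio evaluates to $v^{2}r^{2}/\bigl((v-1)(r-\lambda)^{2}+r^{2}k^{2}\bigr)$, which is strictly less than $v$ (about $v/2$ for a projective plane). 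So this inequality does not even recover Fisher's inequality, let alone the strengthening $\lceil v(r+1)/(k+1)\rceil$, and no covering can behave better than a design here. Your proposed refinement of the denominator via $\lambda_{ij}\leq r_{i}$ makes matters worse, not better: it yields $\sum_{j\neq i}\lambda_{ij}^{2}\leq r_{i}^{2}(k-1)$ and hence only $\mathrm{rank}\geq v/k$, essentially the trivial count. The ``$+1$'' in $v(r+1)/(k+1)$ cannot be produced by forcing $\sum_{i}r_{i}$ above $vr$ either; that is a block-counting effect that improves the Schonheim bound, not a rank statement.

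The second half of your write-up (the rank-one perturbation $MM^{T}+\alpha J$, or bordering $M$ by a scaled all-ones column) is explicitly conjectural (``I expect the actual proof to proceed via\dots'') and is not carried out, so it cannot close the gap. For the record, Horsley's argument is a strengthening of the classical Bose linear-algebra proof of Fisher's inequality rather than a trace estimate: writing $MM^{T}=\mathrm{diag}(r_{i}-\lambda)+\lambda J+E$, where $E\geq 0$ has zero diagonal and row sums $e_{i}=r_{i}(k-1)-\lambda(v-1)$, the hypothesis $d<r-\lambda$ makes the rows indexed by points of minimum replication strictly diagonally dominant, and a careful deletion/counting argument bounds from below the size of a positive-definite principal submatrix, which is what yields $\lceil v(r+1)/(k+1)\rceil$. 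Your instinct that $d<r-\lambda$ is the dominance condition is correct, but the mechanism that converts it into the final bound is missing from your proposal.
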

\begin{lemma}\label{le5} {\rm \cite{HOR}} Let $v,k$ and $\lambda$ be positive integers such that $3\leq k<v$, and let $r$ and $d$ be the integers such that $\lambda(v-1)=r(k-1)+d$ and $0\leq d<k-1$. If $d<r-\lambda$, then \[
b\leq P'_{\lambda}(r,d)\text{ where }P'_{\lambda}(r,d):=\left\lfloor\frac{v(r-1)}{k-1}\right\rfloor.\]
\end{lemma}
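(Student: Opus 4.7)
My plan is to argue by contradiction using a Gershgorin/Weyl eigenvalue comparison applied to the Gram matrix of a suitable submatrix of the incidence matrix. Let $(V,\mathcal{B})$ be a $(v,k,\lambda)$-packing with $b$ blocks; write $r_x$ for the replication of the point $x$ and $\lambda_{xy}$ for the joint replication of the pair $\{x,y\}$, so that $\lambda_{xy}\leq\lambda$. I would first record the standard Johnson-type estimate $r_x(k-1)=\sum_{y\neq x}\lambda_{xy}\leq (v-1)\lambda = r(k-1)+d<r(k-1)+(k-1)$, whence $r_x\leq r$ for every $x$.

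Next I would suppose for contradiction that $b\geq\lfloor v(r-1)/(k-1)\rfloor+1$, equivalently $b(k-1)\geq v(r-1)+1$. Writing $s_x:=r-r_x\geq 0$, this forces $\sum_x s_x=vr-bk\leq v-b-1$, and since every non-tight point contributes at least $1$ to this sum, the set $T:=\{x:r_x=r\}$ of tight points satisfies $|T|\geq b+1$.

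The heart of the argument is a rank count on the $|T|\times b$ submatrix $M_T$ of the incidence matrix of $(V,\mathcal{B})$ whose rows are indexed by $T$. Its Gram matrix $M_TM_T^T$ has diagonal $r$ and off-diagonal entries $\lambda_{xy}$, and I would decompose
\[
M_TM_T^T=\bigl((r-\lambda)I+\lambda J\bigr)-\Delta,
\]
where $\Delta$ is the symmetric matrix on $T$ with zero diagonal and off-diagonal entries $\delta_{xy}:=\lambda-\lambda_{xy}\geq 0$. For each $x\in T$, the row sum of $\Delta$ restricted to $T$ obeys $\sum_{y\in T\setminus\{x\}}\delta_{xy}\leq\sum_{y\neq x}\delta_{xy}=\lambda(v-1)-r(k-1)=d$, so by the Gershgorin circle theorem $\lambda_{\max}(\Delta)\leq d$; meanwhile $(r-\lambda)I+\lambda J$ has minimum eigenvalue $r-\lambda$. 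Weyl's inequality then yields
\[
\lambda_{\min}(M_TM_T^T)\geq(r-\lambda)-d>0,
\]
where the strict inequality is exactly the hypothesis $d<r-\lambda$. Hence $M_TM_T^T$ is positive definite, giving $\mathrm{rank}(M_T)=\mathrm{rank}(M_TM_T^T)=|T|$; but $\mathrm{rank}(M_T)\leq b$, contradicting $|T|\geq b+1$.

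The main obstacle, to the extent there is one, is identifying the right decomposition of $M_TM_T^T$ and recognizing that $d<r-\lambda$ is exactly the slack needed to tip the Gershgorin/Weyl comparison into strict positive definiteness; any weakening would permit $\lambda_{\max}(\Delta)=r-\lambda$ and collapse the rank argument. This mirrors the positive-definiteness technique behind the covering bound in Lemma~\ref{le4}, where a parallel Gram-matrix argument on the dual structure supplies a lower bound on the rank (hence on $b$) for coverings.
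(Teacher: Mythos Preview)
The paper does not supply its own proof of Lemma~\ref{le5}; the result is quoted from Horsley~\cite{HOR} and stated without argument. So there is no in-paper proof to compare against.

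Your argument is correct and self-contained. The chain $r_x(k-1)=\sum_{y\neq x}\lambda_{xy}\leq(v-1)\lambda<(r+1)(k-1)$ indeed gives $r_x\leq r$; the counting $\sum_x(r-r_x)=vr-bk\leq v-b-1$ (from the contradiction hypothesis $b(k-1)\geq v(r-1)+1$) forces at least $b+1$ tight points; and the decomposition $M_TM_T^{T}=(r-\lambda)I+\lambda J-\Delta$ together with the Gershgorin bound $\lambda_{\max}(\Delta)\leq d$ and Weyl's inequality yields $\lambda_{\min}(M_TM_T^{T})\geq(r-\lambda)-d>0$, giving $\mathrm{rank}(M_T)=|T|\geq b+1>b$, a contradiction. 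The key observation that for a tight point $x$ the full deficiency row-sum $\sum_{y\neq x}(\lambda-\lambda_{xy})$ equals exactly $d$ (and hence the row-sum restricted to $T$ is at most $d$) is what makes the Gershgorin step go through.

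For context, your approach is precisely in the spirit of Horsley's method in~\cite{HOR}, which generalizes Bose's positive-definiteness proof of Fisher's inequality: one isolates the points of maximum replication, shows there are many of them under the contradiction hypothesis, and then uses a spectral/rank bound on the corresponding Gram matrix. So while the paper itself gives no proof, your proposal matches the technique of the cited source.
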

Again let $(V,\mathcal{B})$ be a $2$-$(v,k,\lambda)$ adesign with $b$ blocks. Clearly $(V,\mathcal{B})$ is a $(v,k,\lambda)$-covering and a $(v,k,\lambda+1)$-packing. Let $r_{1},d_{1}$ and $\lambda$ be defined, respectively, as $r,d$ and $\lambda$ were defined in Lemma \ref{le4}, and let $r_{2},d_{2}$ and $\lambda+1$ be defined, respectively, as $r,d$ and $\lambda$ were defined in Lemma \ref{le5}. Then, summing up the above discussion gives us \begin{equation}\label{eq9}
C^{*}\leq b \leq P^{*},
\end{equation} where $C^{*}$ is equal to $C'_{\lambda}(r_{1},d_{1})$ if $r_{1}$ and $d_{1}$ satisfy the conditions of Lemma \ref{le4}, and equal to $C_{\lambda}$ otherwise, and $P^{*}$ is equal to $P'_{\lambda}(r_{2},d_{2})$ if $r_{2}$ and $d_{2}$ satisfy the conditions of Lemma \ref{le5}, and equal to $P_{\lambda}$ otherwise.
\subsection{Constructions of $2$-adesigns}
This section will also be concerned with constructions via adjacency matrices of strongly regular graphs, and we will assume much of the same notation used in Section \ref{sec3}. We open the discussion with the following simple construction.
\par
Let $A$ be the incidence matrix of a strongly regular graph with parameters $(v,k,\lambda,\mu)$ where $\mu=\lambda+1$ or $\lambda+3$. By Lemma \ref{le00} we have that \begin{equation}\label{eq00C}
(A+I)^{2}=(k+1)I+(\lambda+2)A+\mu(J-I-A).\end{equation} Then, by the regularity of the graphs, (\ref{eqAB}) applies, and $(\mathcal{V}_{A},\mathcal{B}_{I+A})$ is a $2$-$(v,k+1,\lambda')$ adesign where $\lambda'=\lambda+1$ or $\lambda+2$.
\begin{example}\label{ex00E} Strongly regular graphs with parameters of the form $(m^{2},d(m-1),d^{2}-3d+m,d(d-1))$ are called {\it pseudo-Latin square} type \cite{VAN}, and are known to exist whenever $m$ is an odd prime power and $2\leq d\leq m$, via orthogonal arrays (see Theorem 6.39 of \cite{STIN} and Section 2.5.2 of \cite{BEH}). Then, taking as $m$ the odd prime power $q$, and setting $d=\frac{q-1}{2}$, we can construct a strongly regular graph whose complementary graph has parameters $(q^{2},\frac{q^{2}+2q-3}{2},\frac{q^{2}+4q-9}{4},\frac{q^{2}+4q+3}{4})$. If $A$ is the adjacency matrix of such a strongly regular graph, then, by (\ref{eq00C}), we have that $(\mathcal{V}_{A},\mathcal{B}_{I+A})$ is a $2$-$(q^{2},\frac{q^{2}+2q-1}{2},\frac{q^{2}+4q-1}{4})$ adesign.
\end{example}
In the next constructions, we again consider unions of row supports of certain strongly regular graphs.
\par
Suppose that $A$ and $A'$ are the adjacency matrices of strongly regular graphs with parameters $(v,k,\lambda,\mu)$ where $\mu=\lambda+1$ or $\mu=\lambda-1$, and the property that either $A+A'$ is a $0$-$1$ matrix, or $A+A'+I$ is a $1$-$2$ matrix (i.e., all of the off-diagonal entries of $A+A'$ are either $1$ or $2$). By Lemma \ref{le00} we have that \begin{equation}\label{eq001}
\mb A & A' \me \mb A \\ A' \me = A^{2} + (A')^{2} = 2kI+(\lambda-\mu)(A+A')+2\mu(J-I). \end{equation} Thus, it follows from (\ref{eqAB}) by the regularity of the graphs, and the fact that either $A+A'$ is a $0$-$1$ matrix or $A+A'+I$ is a $1$-$2$ matrix, that $(\mathcal{V}_{A},\mathcal{B}_{A}\cup\mathcal{B}_{A'})$ is a $2$-$(v,k,\lambda')$ adesign where \[
\lambda'=\begin{cases} 2\mu\text{ or }2\mu-1,&\text{ if }A+A'\text{ is a } 0\text{-}1\text{ matrix,}\\
                       2\mu+1\text{ or }2\mu-2,&\text{ if }A+A'+I\text{ is a } 1\text{-}2\text{ matrix.}\end{cases}\]
Indeed, if $\mu=\lambda+1$, and $A+A'$ is a $0$-$1$ matrix, then (\ref{eq001}) implies that any pair of distinct points of $\mathcal{V}_{A}$ (since $A$ and $A'$ have the same dimensions, we may assume that $\mathcal{V}_{A}=\mathcal{V}_{A'}$) appears in either $2\mu-1$ or $2\mu$ blocks of $\mathcal{B}_{A}\cup\mathcal{B}_{A'}$. The other cases can be checked in a similar way.
\par
Now suppose that $A$ and $A'$ are the adjacency matrices of strongly regular graphs with parameters $(v,k,\lambda,\mu)$ where $\mu=\lambda+1$ or $\mu=\lambda+3$, and the property that either $A+A'$ is a $0$-$1$ matrix, or $A+A'+I$ is a $1$-$2$ matrix. By Lemma \ref{le00} we have \begin{equation}\label{eq002}
\mb A+I & A'+I \me \mb A+I \\ A'+I \me = (A+I)^{2} + (A'+I)^{2} = 2(k+1)I+(\lambda-\mu+2)(A+A')+2\mu(J-I). \end{equation} Thus, it follows from (\ref{eqAB}) by regularity of the graphs and the fact that either $A+A'$ is a $0$-$1$ matrix or $A+A'+I$ is a $1$-$2$ matrix, that $(\mathcal{V}_{A},\mathcal{B}_{A+I}\cup\mathcal{B}_{A'+I})$ is a $2$-$(v,k,\lambda')$ adesign where \[
\lambda'=\begin{cases} 2\mu\text{ or }2\mu-1,&\text{ if }A+A'\text{ is a }0\text{-}1\text{ matrix,}\\
                       2\mu+1\text{ or }2\mu-2,&\text{ if }A+A'+I\text{ is a }1\text{-}2\text{ matrix.}\end{cases}\]
Indeed, if $\mu=\lambda+3$, and $A+A'+I$ is a $1$-$2$ matrix, then (\ref{eq002}) implies that any pair of distinct points of $\mathcal{V}_{A}$ ($=\mathcal{V}_{A'}$) appears in either $2\mu-2$ or $2\mu-1$ blocks of $\mathcal{B}_{A+I}\cup\mathcal{B}_{A'+I}$. The other cases can be checked in a similar way.
\begin{example}\label{ex01E} Let $q$ be an odd prime power. Let $G=(\mathbb{F}_{q},+)\times(\mathbb{F}_{q},+)$, and define \[
D=\{(a,b)\in G\mid a\text{ and }b\text{ are both squares or both nonsquares}\},\]and \[
\tilde{D}=\{(a,b)\in G\mid \text{one of }a,b\text{ is square and the other is nonsquare}\}.\]
By Lemma \ref{le00A} (see Appendix), both $D$ and $\tilde{D}$ are partial difference sets with parameters \\$(q^{2},\frac{q^{2}-2q+1}{2},\frac{q^{2}-4q+3}{4},\frac{q^{2}-4q+7}{4})$. Thus, the incidence matrices $A$ resp. $A'$ of $(G,Dev(D))$ resp. $(G,Dev(\tilde{D}))$ are the adjacency matrices of strongly regular graphs where $A+A'$ is a $0$-$1$ matrix, and $2J-I-(A+A')$ is a $1$-$2$ matrix. Then, from the above assertions, it follows that \begin{enumerate}[(i)]
\item $(G,Dev(D)\cup Dev(\tilde{D}))$ is a $2$-$(q^{2},\frac{q^{2}-2q+1}{2},\frac{q^{2}-4q+3}{2})$ adesign, and
\item $(G,Dev(\overline{D})\cup Dev(\overline{\tilde{D}}))$ is a $2$-$(q^{2},\frac{q^{2}+2q+1}{2},\frac{q^{2}+4q-1}{2})$ adesign.\end{enumerate} In particular, if $q=5$, so that \[
D=\{(2,3),(1,4),(3,2),(4,1),(3,3),(1,1),(4,4),(2,2)\}\] and  \[
\tilde{D}=\{(4,3),(1,2),(1,3),(3,1),(2,1),(2,4),(3,4),(4,2)\},\]then the incidence structure $(\mathbb{F}_{5}\times\mathbb{F}_{5},Dev(D)\cup Dev(\tilde{D}))$ is a $2$-$(25,8,4)$ adesign with 50 blocks.
\end{example}
The above discussion is summarized in the following.
\begin{theorem}\label{th001} Let $A$ and $A'$ be the adjacency matrices of strongly regular graphs with parameters $(v,k,\lambda,\mu)$ and the property that $A+A'$ is a $0$-$1$ matrix or $A+A'+I$ is a $1$-$2$ matrix. \begin{enumerate}[(i)]
\item If $\mu=\lambda+1$ or $\lambda-1$, then $(\mathcal{V}_{A},\mathcal{B}_{A}\cup\mathcal{B}_{A'})$ is a $2$-$(v,k,\lambda')$ adesign, and
\item if $\mu=\lambda+1$ or $\lambda+3$, then $(\mathcal{V}_{A},\mathcal{B}_{A+I}\cup\mathcal{B}_{A'+I})$ is a $2$-$(v,k+1,\lambda')$ adesign,\end{enumerate} where \[
\lambda'=\begin{cases} 2\mu\text{ or }2\mu-1,&\text{ if }A+A'\text{ is a } 0\text{-}1\text{ matrix,}\\
                       2\mu+1\text{ or }2\mu-2,&\text{ if }A+A'+I\text{ is a } 1\text{-}2\text{ matrix.}\end{cases}\]
\end{theorem}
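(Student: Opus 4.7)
The plan is to observe that the stated result is essentially a consolidation of the computations carried out in the discussion immediately preceding the theorem, so the proof amounts to organizing those cases and verifying the adesign identity (\ref{eqAB}). For part (i), form the block matrix $M=[A\mid A']$, which is (up to a relabeling of columns) exactly the incidence matrix of $(\mathcal{V}_{A},\mathcal{B}_{A}\cup\mathcal{B}_{A'})$. For part (ii), take $M=[A+I\mid A'+I]$, the incidence matrix of $(\mathcal{V}_{A},\mathcal{B}_{A+I}\cup\mathcal{B}_{A'+I})$. In both cases, strong regularity of the underlying graphs together with the fact that adjacency matrices have constant row sums gives immediately $M^{T}J=kJ$ (respectively $(k+1)J$), so the second half of (\ref{eqAB}) is free.

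The main step is to compute $MM^{T}=A^{2}+(A')^{2}$ for (i) and $MM^{T}=(A+I)^{2}+(A'+I)^{2}$ for (ii), which Lemma \ref{le00} allows us to do explicitly. These are precisely the identities (\ref{eq001}) and (\ref{eq002}) already recorded:
\[
MM^{T}=2kI+(\lambda-\mu)(A+A')+2\mu(J-I)
\]
and
\[
MM^{T}=2(k+1)I+(\lambda-\mu+2)(A+A')+2\mu(J-I),
\]
respectively. Off the diagonal, the entries of $MM^{T}$ therefore lie in the two-element set $\{2\mu,\,2\mu+(\lambda-\mu)\}$ in case (i), and in $\{2\mu,\,2\mu+(\lambda-\mu+2)\}$ in case (ii), when $A+A'$ is a $0$-$1$ matrix, and analogously (shifted by the appropriate scalar) when $A+A'+I$ is a $1$-$2$ matrix.

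The crux is then a bookkeeping check: the hypothesis $\mu\in\{\lambda\pm 1\}$ in (i), and $\mu\in\{\lambda+1,\lambda+3\}$ in (ii), forces the scalar multiplying $(A+A')$ in $MM^{T}$ to equal $\pm 1$, so that when combined with the $0$-$1$ (resp.\ shifted $1$-$2$) hypothesis on $A+A'$ the off-diagonal entries of $MM^{T}$ take exactly two consecutive values. Writing $\lambda'$ for the smaller of the two, and letting $S$ be the $0$-$1$ matrix with $S_{ij}=1$ exactly when $(MM^{T})_{ij}=\lambda'$ (and $S_{ii}=0$), the identity $MM^{T}=rI+\lambda'S+(\lambda'+1)(J-I-S)$ of (\ref{eqAB}) holds, identifying $(\mathcal{V}_{A},\mathcal{B}_{A}\cup\mathcal{B}_{A'})$ (resp.\ $(\mathcal{V}_{A},\mathcal{B}_{A+I}\cup\mathcal{B}_{A'+I})$) as a $2$-adesign with the tabulated value of $\lambda'$.

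There is no genuine obstacle here beyond correctly tabulating the four subcases of each of (i) and (ii), according to the sign of $\lambda-\mu$ (resp.\ $\lambda-\mu+2$) and which of the two structural hypotheses on $A+A'$ is in force. One should also confirm that the resulting incidence structure is genuinely an adesign and not a $2$-design: this is automatic unless $A+A'$ (resp.\ $A+A'+I$) has \emph{constant} off-diagonal entries, in which case both values collapse and one recovers a $2$-design; generic examples (as in Example \ref{ex01E}) show that the adesign case is nonvacuous.
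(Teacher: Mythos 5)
Your proposal is correct and follows essentially the same route as the paper: the theorem is explicitly a summary of the discussion preceding it, which computes $MM^{T}$ via Lemma \ref{le00} exactly as you do in (\ref{eq001}) and (\ref{eq002}), notes that the coefficient of $A+A'$ is $\pm1$ under the stated hypotheses on $\mu$, and reads off the two consecutive off-diagonal values to apply (\ref{eqAB}). Your closing caveat that one must exclude the degenerate case where $A+A'$ has constant off-diagonal entries (e.g.\ $A'=J-I-A$, where the structure collapses to a $2$-design as in Theorem \ref{th5}) is a point the paper glosses over and is worth keeping.
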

\begin{remark}\label{re01} The strongly regular graphs of pseudo-Latin square type mentioned in Example \ref{ex00E}, or those constructed by Pasechnik in \cite{PAS}, or either of their complements, have parameters which make them good candidates for satisfying the conditions of either part of Theorem \ref{th001}. However, realizing two distinct graphs of either one of these types whose incidence matrices $A$ and $A'$ are such that $A+A'$ is a $0$-$1$ matrix, or $A+A'+I$ is a $1$-$2$ matrix, seems difficult in general. We leave this as an open problem.
\end{remark}
Next we show how the concepts of derived and residual designs can be applied to certain adesigns.
\begin{theorem}\label{th004} Let $A$ be the adjacency matrix of a Paley type $(v,k,\lambda,\lambda+1)$ strongly regular graph. Fix a row of $A$ and let $R$ denote its support. Define \[\mathcal{B}=\{R\cap S\mid S \ (\ne R)\text{ is the support of a row of }A\},\] and let $\mathcal{B}_{\infty}$ denote the set containing all members of $\mathcal{B}$ of size $\lambda+1$, and all members of $\mathcal{B}$ of size $\lambda$ modified by adjoining the point $\infty$. Then $(R\cup \{\infty\},\mathcal{B}_{\infty})$ is a $2$-$(k+1,\lambda+1,\lambda-1)$ adesign.
\end{theorem}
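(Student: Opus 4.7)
The plan is to exploit Lemma~\ref{le00}, which gives $AA^T = kI + \lambda A + (\lambda+1)(J-I-A)$; note that the $(x,y)$-entry of $AA^T$ records the size of the intersection of the supports of rows $x$ and $y$. First I would fix the distinguished row indexed by $x_0$ with support $R$, and read off that $|R \cap S_y| = \lambda$ whenever $y$ is a neighbor of $x_0$ (i.e.\ $y \in R$), and $|R \cap S_y| = \lambda+1$ whenever $y$ is a non-neighbor (i.e.\ $y \in V \setminus (R \cup \{x_0\})$). Because the graph is of Paley type, each of these two sets has cardinality $k$, so $\mathcal{B}$ consists of $k$ blocks of size $\lambda$ and $k$ blocks of size $\lambda+1$. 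Adjoining $\infty$ to each block of size $\lambda$ produces $\mathcal{B}_\infty$, all of whose $2k = v-1$ blocks have the uniform size $\lambda+1$, matching the required block parameter.

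Next I would count, for each pair of points in $R \cup \{\infty\}$, the number of blocks of $\mathcal{B}_\infty$ that contain it. A pair $\{\infty, p\}$ with $p \in R$ sits inside the blocks coming from rows $y \in R$ with $p \in S_y$, that is, from common neighbors of $x_0$ and $p$; since $p \sim x_0$, there are exactly $\lambda$ such $y$. A pair $\{p,q\} \subseteq R$ sits inside the blocks coming from rows $y \neq x_0$ lying in $N(p) \cap N(q)$; the total number of common neighbors of $p$ and $q$ is $\lambda$ or $\lambda+1$ according to whether $p \sim q$ or $p \not\sim q$, and since $x_0$ itself always lies in $N(p) \cap N(q)$ one subtracts one, obtaining $\lambda-1$ and $\lambda$, respectively. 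Assembling these cases shows every pair of points is covered either $\lambda-1$ or $\lambda$ times, which is exactly the 2-adesign condition with $\lambda' = \lambda - 1$.

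The only step requiring a little extra care is verifying that $(R \cup \{\infty\}, \mathcal{B}_\infty)$ is genuinely an adesign rather than a 2-design, i.e.\ that both multiplicities $\lambda - 1$ and $\lambda$ really occur. The value $\lambda$ is witnessed by any pair $\{\infty,p\}$, so only the existence of some adjacent pair $\{p,q\} \subseteq R$ needs justification. That reduces to showing the subgraph induced on $R$ has an edge, and since that induced subgraph is $\lambda$-regular on $k$ vertices, the condition $\lambda \geq 1$ suffices, which holds automatically for Paley type parameters once $v$ is large enough. I expect this verification, together with the bookkeeping in the pair-counting step, to be the only non-routine part of the argument; everything else is a direct consequence of the SRG identity in Lemma~\ref{le00}.
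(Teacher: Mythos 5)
Your proof is correct and follows essentially the same route as the paper's: both arguments identify the blocks with the rows $S_y$ ($y\neq x_0$), read the intersection sizes off the strongly regular graph identity of Lemma~\ref{le00}, and obtain the multiplicities by counting common neighbors ($\lambda-1$ for adjacent pairs in $R$, $\lambda$ for non-adjacent pairs in $R$ and for pairs through $\infty$). If anything, your writeup is the more complete one, since you also check that every block of $\mathcal{B}_\infty$ has size $\lambda+1$ and that both multiplicities genuinely occur (so the structure is an adesign and not a $2$-design), points the paper's proof leaves implicit.
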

\begin{proof} If $x,y\in R$ are distinct, then the number of members of $\mathcal{B}$ in which they appear together, which is either $\lambda$ or $\lambda+1$, is also the number of members of $\mathcal{B}_{\infty}$ in which they appear together. We want to count the number of members of $\mathcal{B}_{\infty}$ in which $x$ and $\infty$ appear together. Notice there are $k-1$ members $B_{1},...,B_{k-1}\in\mathcal{B}_{\infty}$ containing $x$. The number of $B_{i}$'s also containing $\infty$ is the number of common neighbors of the two vertices corresponding to $R$ and $x$. Since $x\in R$, these two vertices are adjacent, whence the number of common neighbors is $\lambda$.
\end{proof}
The proof of the following corollary is a simple application of the principle of inclusion and exclusion, and so is omitted.
\begin{corollary} Let $A$ be the adjacency matrix of a Paley type $(v,k,\lambda,\lambda+1)$ strongly regular graph. Fix a row of $A$ and let $R$ denote its support. Let $\mathcal{B}_{\overline{\infty}}$ be the set of complements in $\mathcal{V}_{A}\cup \{\infty\}$ of members of $\mathcal{B}_{\infty}$. Then $(R\cup \{\infty\},\mathcal{B}_{\overline{\infty}})$ is a $2$-$(k+1,\lambda+2,\lambda+1)$ adesign.
\end{corollary}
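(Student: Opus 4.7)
The plan is to reduce the corollary to Theorem~\ref{th004} via inclusion--exclusion. For two distinct points $a, b \in R \cup \{\infty\}$, a complement $\overline{B} \in \mathcal{B}_{\overline{\infty}}$ contains $\{a, b\}$ precisely when the corresponding $B \in \mathcal{B}_{\infty}$ is disjoint from $\{a, b\}$. Writing $r_x$ for the number of blocks of $\mathcal{B}_{\infty}$ through the point $x$, the number of such $B$ equals
\[
|\mathcal{B}_{\infty}| - r_a - r_b + r_{\{a,b\}},
\]
so the task reduces to showing that this quantity is always $\lambda+1$ or $\lambda+2$, and that both values occur.

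First I would extract the relevant counts. Indexing blocks by rows of $A$ other than the fixed row (whose support is $R$), we have $|\mathcal{B}_{\infty}| = v - 1 = 2k$, using $v = 2k+1$ for Paley type. The block coming from row $i$ is $R \cap S_i$, with $\infty$ adjoined exactly when $|R \cap S_i| = \lambda$, which occurs iff vertex $i$ is adjacent to the fixed vertex. This gives $r_{\infty} = k$, and $r_x = k - 1$ for $x \in R$ (the neighbors of $x$ other than the fixed vertex). The reasoning already used in the proof of Theorem~\ref{th004} supplies the pair counts: $r_{\{x, \infty\}} = \lambda$ for $x \in R$, and $r_{\{x, y\}} \in \{\lambda - 1, \lambda\}$ for distinct $x, y \in R$, since these count common neighbors of $x$ and $y$ with the fixed vertex excluded.

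Substituting into the formula splits into two cases. For $a, b \in R$, the count is $2k - 2(k-1) + r_{\{a,b\}} = 2 + r_{\{a,b\}} \in \{\lambda+1,\, \lambda+2\}$, with both values realized because adjacent and non-adjacent pairs both exist within $R$ (the subgraph induced on $R$ is $\lambda$-regular on $k = 2\lambda+2$ vertices, hence neither empty nor complete). For $a \in R$ and $b = \infty$, the count is $2k - (k-1) - k + \lambda = \lambda + 1$. Therefore every pair is contained in $\lambda+1$ or $\lambda+2$ complementary blocks, and the structure is not a $2$-design; the block size is $(k+1) - (\lambda+1) = k - \lambda = \lambda + 2$ by the Paley identity $k = 2\lambda + 2$. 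The only real subtlety is the asymmetric role of $\infty$, whose replication number differs from that of the ordinary points of $R$ and so forces the two-case split; beyond that the argument is routine bookkeeping.
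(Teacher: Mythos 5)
Your proof is correct and follows exactly the route the paper intends: the paper omits the proof, describing it as ``a simple application of the principle of inclusion and exclusion,'' and that is precisely your computation $|\mathcal{B}_{\infty}|-r_a-r_b+r_{\{a,b\}}$ with the counts $|\mathcal{B}_{\infty}|=2k$, $r_\infty=k$, $r_x=k-1$, $r_{\{x,\infty\}}=\lambda$, $r_{\{x,y\}}\in\{\lambda-1,\lambda\}$ inherited from Theorem~\ref{th004}, all of which check out. You also correctly (and silently) read the complements as being taken in $R\cup\{\infty\}$ rather than in $\mathcal{V}_A\cup\{\infty\}$ as the statement literally says, which is the only reading under which the block size $\lambda+2$ is right.
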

\begin{example} Let $q\equiv 1 \ ({\rm mod} \ 4)$ be a prime power and let $D\subseteq \mathbb{F}_{q}$ be the quadratic residues. If we take $\mathcal{B}=Dev(D)$ then, by Theorem \ref{th004}, $(D\cup\{\infty\},\mathcal{B}_{\infty})$ is a $2$-$(\frac{q+1}{2},\frac{q-1}{4},\frac{q-9}{4})$ adesign with $q-1$ blocks. Moreover, since $\lfloor\frac{vr}{k}\rfloor = q+1$, where $r=\frac{q-1}{2}$, any such adesign is two blocks short of meeting the Johnson bound for packings (See Section \ref{secB}).
\end{example}

Let $(V,\mathcal{B})$ be an incidence structure. Let $p\in V$ and define $\mathcal{B}_{p}=\{B\setminus\{p\}\mid p\in B,B\in\mathcal{B}\}$. The incidence structure $(V\setminus\{p\},\mathcal{B}_{p})$ is called the {\it contraction} of $(V,\mathcal{B})$ at the point $p$. We can obtain new symmetric $2$-adesigns by contracting on a point of any one of the $3$-adesigns constructed in Section \ref{sec3}. It is easy to see that contracting at points of a $3$-adesign will result in a $2$-adesign as long as not all three subsets of points occur in the same number of blocks of the contraction.
\begin{remark}\label{re1} Let $(V,\mathcal{B})$ be any one of the $3$-$(v,k,\lambda)$ adesigns constructed in Section \ref{sec3}, and let $p$ be any point of $V$. It is clear from their proofs that contracting at $p$ gives a $2$-$(v,k,\lambda)$ adesign since we can always find a pair $x,y\in V\setminus\{p\}$ such that $x,y$ and $p$ appear together in $\lambda$ blocks of $\mathcal{B}$, and we can also find another pair $x',y'\in V\setminus\{p\}$ such that $x',y'$ and $p$ appear together in $\lambda+1$ blocks of $\mathcal{B}$. Moreover, contracting at a point $p$ of $(V,\mathcal{B})$ gives a $\lambda$-covering which meets the bound given in Lemma \ref{le4}, i.e., it is a {\em minimal} $\lambda$-{\em covering}.
\end{remark}
\begin{example} Let $q$ be an odd prime power and let $D$ and $\tilde{D}$ be defined as in Example \ref{ex01E}. Denote $\mathbb{F}_{q}\times\mathbb{F}_{q}$ by $V$ and $Dev(D\cup (\mathbb{F}_{q}\times\{0\}))\cup Dev(\tilde{D}\cup (\{0\}\times \mathbb{F}_{q}))$ by $\mathcal{B}$. It was shown in \cite{ZHANG} that $D\cup (\mathbb{F}_{q}\times\{0\})$ is a Paley type partial difference set in $V$, whence by Theorem \ref{th5}, the incidence structure $(V,\mathcal{B})$ is a $3$-$(q^{2},\frac{q^{2}+1}{2},\frac{q^{2}-1}{4})$ adesign with $2q$ blocks. If we contract at the point $(0,0)$, then we get the incidence structure $(V\setminus\{(0,0)\},\mathcal{B}_{(0,0)})$, which, by Remark \ref{re1}, is a $2$-$(q^{2}-1,\frac{q^{2}-1}{2},\frac{q^{2}-1}{4})$ adesign with $b=q^{2}+1$ blocks. Now let $r=\frac{q^{2}+1}{2}$ and $d=\frac{q^{2}-5}{4}$. Then with $v=q^{2}-1,k=\frac{q^{2}-1}{2}$ and $\lambda=\frac{q^{2}-1}{4}$, we have that $\lambda(v-1)=r(k-1)-d$ with $0\leq d<r-\lambda$, and $r$ and $d$ satisfy the conditions of Lemma \ref{le4}. Then, since $(q^{2}-1)(q^{2}+3)/(q^{2}+1)>q^{2}$, we have \[
\left\lceil \frac{v(r+1)}{(k+1)}\right\rceil=\left\lceil \frac{(q^{2}-1)(q^{2}+3)}{(q^{2}+1)}\right\rceil=q^{2}+1=b,\]i.e., $b$ meets the bound given in Lemma \ref{le4}, and $(V\setminus\{(0,0)\},\mathcal{B}_{(0,0)})$ is a minimal $\lambda$-covering.
\end{example}
Our next construction is a modification of Bose's Steiner triple systems \cite{BOS2}.
\begin{theorem} Let $n>3$ be an odd integer, let $G=\mathbb{Z}_{n}\times\mathbb{Z}_{3}$, and let ``$<$'' be any total ordering on $\mathbb{Z}_{n}$ (e.g. $0<1<\cdots<n-1$). Define \begin{eqnarray*}
\mathcal{B}
&=&\left\{\{(a,i),(b,i),(\frac{n+1}{2}(a+b),i+1)\}\bigg\vert a,b\in\mathbb{Z}_{n},a<b\right\}\\
& &\cup\left\{\{(a,i),(b-1,i),(\frac{n+1}{2}(a+b),i+1)\}\bigg\vert a,b\in\mathbb{Z}_{n},a\not<b,a\neq b-1\right\}\\
& &\cup\bigg\{\{(a,0),(a,1),(a,2)\}\bigg\vert a\in\mathbb{Z}_{n}\bigg\}.\end{eqnarray*}Then $(G,\mathcal{B})$ is a $2$-$(3n,3,1)$ adesign (with $3n^{2}-2n$ blocks).
\end{theorem}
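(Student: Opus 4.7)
The plan is to check directly that each $2$-subset of $G$ is contained in exactly $1$ or $2$ blocks, with both values occurring, so that $(G,\mathcal{B})$ is a $2$-$(3n,3,1)$ adesign. Since $n$ is odd, $\tfrac{n+1}{2}$ is the multiplicative inverse of $2$ in $\mathbb{Z}_n$, so $\tfrac{n+1}{2}(a+b)$ is the ``midpoint'' $(a+b)/2$. Every block of the first two families has two coordinates at some level $i$ and one at level $i+1$; setting $x=a$, $y=b-1$ in a type-$2$ block exhibits the level-$i$ pair as $\{x,y\}$ and the midpoint as $(x+y+1)/2$. A first step is to verify that for every unordered pair $\{x,y\}\subseteq\mathbb{Z}_n$ of distinct elements the parametrisation $(a,b)=(y,x+1)$ (with the convention $x<y$ in the chosen total ordering) satisfies $a\not<b$ and $a\ne b-1$, so that the pair is realised as the level-$i$ part of exactly one type-$2$ block; the wrap-around case $y=n-1$ needs separate treatment, as it admits a second parametrisation $(a,b)=(x,0)$ giving the same block. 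Since $\tfrac{1}{2}\neq 0$ in $\mathbb{Z}_n$, the type-$1$ midpoint $(x+y)/2$ and the type-$2$ midpoint $(x+y+1)/2$ differ, and the corresponding blocks are distinct.

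With this reformulation, a same-level pair $\{(x,i),(y,i)\}$ with $x\neq y$ lies in exactly the two blocks just described and in no type-$3$ block, so it is in exactly $2$ blocks. For an inter-level pair, the cyclic structure of $\mathbb{Z}_3$ reduces matters to pairs of the form $\{(x,i),(y,i+1)\}$, since every unordered pair of distinct levels has a unique consecutive orientation $(i,i+1)\pmod 3$; only blocks of families $1,2$ with base level $i$ and type-$3$ blocks can contain such a pair. Solving $x\in\{u,v\}$ with $(u+v)/2=y$ gives the unique type-$1$ candidate $\{u,v\}=\{x,2y-x\}$, valid iff $y\neq x$; similarly $(u+v+1)/2=y$ forces $\{u,v\}=\{x,2y-x-1\}$, valid iff $y\neq x+\tfrac{n+1}{2}$; and a type-$3$ block contains the pair iff $x=y$. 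Tabulating the three cases gives $0+1+1=2$ blocks when $y=x$, $1+0+0=1$ block when $y=x+\tfrac{n+1}{2}$, and $1+1+0=2$ blocks otherwise.

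To conclude, every $2$-subset of $G$ lies in exactly $1$ or $2$ blocks, and both values are realised: the $3n$ ``antipodal'' pairs $\{(x,i),(x+\tfrac{n+1}{2},i+1)\}$ give $1$, while every same-level pair gives $2$. Hence $(G,\mathcal{B})$ is a $2$-$(3n,3,1)$ adesign. For the tally, families $1$ and $2$ each contribute one block per unordered pair $\{x,y\}\subseteq\mathbb{Z}_n$ per level $i$, giving $3\cdot 2\binom{n}{2}=3n(n-1)$ blocks; the $n$ blocks of family $3$ then bring the total to $3n^2-2n$. The main obstacle I expect is the bookkeeping of the type-$2$ parametrisation: the asymmetric conditions $a\not<b$ and $a\neq b-1$ together with the $b-1$ shift must be reconciled with the cleaner reformulation by unordered level-$i$ pair, and in particular the wrap-around case must be handled carefully so that each pair is realised by exactly one type-$2$ block.
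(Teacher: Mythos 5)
Your proof is correct, and it follows the same basic strategy as the paper (a direct case analysis of which blocks contain a given pair) but with a different and more robust decomposition. The key move you make that the paper does not is to reindex the second family by unordered level-$i$ supports: for each unordered pair $\{x,y\}$ of distinct elements of $\mathbb{Z}_{n}$ and each level $i$ there is exactly one type-2 block with level-$i$ support $\{x,y\}$, with third point $(x+y+1)/2$ at level $i+1$, the two ordered parametrisations $(y,x+1)$ and $(x,y+1)$ giving the same block (exactly one being admissible except when $y=n-1$, where both are). This exhibits the type-2 family as a shifted copy of the type-1 family and yields clean counts: every same-level pair lies in exactly two blocks (one of each type, with distinct midpoints $(x+y)/2$ and $(x+y+1)/2$), and a consecutive-level pair $\{(x,i),(y,i+1)\}$ lies in two blocks unless $y=x+\frac{n+1}{2}$, when it lies in one. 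The paper instead works with ordered parameters and splits cases on the total order; as written, its case (i) asserts that a same-level pair lies in a single block, and its cases (ii)--(iii) make the dichotomy depend on whether $\gamma<\alpha$ rather than on whether $\gamma=\alpha+1$ --- both of which undercount the type-2 blocks through a pair (for $n=5$ the pair $\{(0,0),(1,0)\}$ lies in the type-1 block $\{(0,0),(1,0),(3,1)\}$ and also in the type-2 block $\{(0,0),(1,0),(1,1)\}$ arising from $(a,b)=(1,1)$). Since the corrected counts are still $1$ or $2$, the theorem itself is unaffected, but your reformulation is the one that actually delivers the stated conclusion, and your tally $3\cdot 2\binom{n}{2}+n=3n^{2}-2n$ confirms that the duplicate parametrisations have been correctly merged. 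The only points you leave implicit --- that a block whose base level is not $i$ contains at most one point at level $i$, and that the two candidate parametrisations of a type-2 block are exhaustive --- are immediate from your opening observations, so I see no genuine gap.
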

\begin{proof} Let $(\alpha,j),(\beta,k)\in G$. It is clear that each block is incident with three points. If $\alpha=\beta$ then the pair occurs in the block $\{(\alpha,0),(\alpha,1),(\alpha,2)\}$ or in the block $\{(\alpha,j),(\alpha-1,j),(\alpha,j+1)\}$ ($=\{(\alpha,j),(\beta-1,j),((n+1)/2\cdot(\alpha+\beta),j+1)\}$) and in no other block.
\par
Now assume $\alpha\neq \beta$. Without loss of generality, we can assume that $\alpha<\beta$. There are three cases depending on the residues $k$ and $j$ modulo $3$:
\newline
{\bf (i)} If $k=j$, the pair $(\alpha,j),(\beta,k)$ occurs in the block $\{(\alpha,k),(\beta,k),((n+1)/2\cdot(\alpha+\beta),k+1)\}$ and in no other block.\newline
{\bf (ii)} If $k=j+1 \ ({\rm mod} \ 3)$, the equation $\frac{n+1}{2}(x+\alpha)=\beta$ has a unique solution $x=\gamma$. Since $\frac{a+a}{2}=a$ for all $a\in\mathbb{Z}_{n}$, i.e., the binary operation $f(a,b):=\frac{n+1}{2}$ is idempotent, and $\alpha\neq\beta$, we have $\alpha\neq\gamma$. If $\gamma<\alpha$, the pair $(\alpha,j),(\beta,k)$ occurs in the block $\{(\alpha,j),(\gamma,j),(\beta,k)\}$ ($=\{(\alpha,j),(\gamma,j),((n+1)/2\cdot(\alpha+\gamma),j+1)\}$), as well as in the block $\{(\alpha,j),(\gamma-1,j),(\beta,k)\}$ ($=\{(\alpha,j),(\gamma-1,j),((n+1)/2\cdot(\alpha+\gamma),j+1)\}$), and in no other block. If $\alpha<\gamma$ the pair occurs in in the block $\{(\alpha,j),(\gamma,j),(\beta,k)\}$ and in no other block.\newline
{\bf (iii)} If $j=k+1 \ ({\rm mod} \ 3)$ then the equation $\frac{n+1}{2}(x+\beta)=\alpha$ has a unique solution $x=\gamma$. Since the binary operation $f(a,b):=\frac{n+1}{2}$ is idempotent, and $\alpha\neq\beta$, we have $\gamma\neq\beta$. If $\gamma<\beta$ then the pair $(\alpha,j),(\beta,k)$ occurs in the block $\{(\gamma,k),(\beta,k),(\alpha,j)\}$ ($=\{(\gamma,k),(\beta,k),((n+1)/2\cdot(\gamma+\beta),k+1)\}$), as well as in the block $\{(\beta,k),(\gamma-1,k),(\alpha,j)\}$ ($=\{(\beta,k),(\gamma-1,k),((n+1)/2\cdot(\beta+\gamma),k+1)\}$), and in no other block. If $\gamma>\beta$ then the pair occurs in the block $\{(\beta,k),(\gamma,k),(\alpha,j)\}$ and in no other block.\newline
This completes the proof.
\end{proof}
\section{Concluding Remarks}
In this correspondence we investigated $t$-adesigns and their links with other combinatorial objects such as balanced incomplete block designs, coverings and packings. We first considered the question of when a $(t+1)$-adesign is a $t$-design or a $t$-adesign, and then we constructed several classes of $3$-adesigns which are, in fact, balanced incomplete block designs. We have also discussed some of the restrictions on the possible sets of feasible parameters for both $2$-adesigns and $3$-adesigns. The $2$-adesigns we constructed have new parameters, and some of them have the interesting property that they are minimal $\lambda$-coverings. We leave the reader with the following open problems: (1) We have yet to find an example of a $t$-adesign which is not a $(t-1)$-design. Must a $t$-adesign always be a $(t-1)$-design? (2) In Section \ref{sec2} we made an initial investigation into when a $t$-adesign is either a $(t-1)$-adesign or a $(t-1)$-design. Is it possible to formulate necessary and sufficient conditions for a $t$-adesign to be a $(t-1)$-design? (3) Do the strongly regular graphs mentioned in Remark \ref{re01} (or any other strongly regular graphs not necessarily coming from partial difference sets) satisfy the conditions of either part of Theorem \ref{th001}?
\section*{Acknowledgment}
The authors are very grateful to the three anonymous referees and to the Coordinating Editor for all of their detailed comments that greatly improved the quality and the presentation of this paper.
\section*{Appendix}
We will need some facts about cyclotomic classes and cyclotomic numbers. Let $q=ef+1$ be a prime power, and $\gamma$ a primitive element of the finite field $\mathbb{F}_{q}$ with $q$ elements. The {\it cyclotomic classes} of order $e$ are given by $D_{i}^{(e,q)}=\gamma^{i}\langle \gamma^{e} \rangle$ for $i=0,1,...,e-1$. The {\it cyclotomic numbers of order $e$} are given by $(i,j)_{e}=|D_{i}^{(e,q)}\cap (D_{j}^{(e,q)}+1)|$. It is obvious that there are at most $e^{2}$ different cyclotomic numbers of order $e$. When it is clear from the context, we will simply denote $(i,j)_{e}$ by $(i,j)$.
\par
We will need to use the cyclotomic numbers of order $2$.
\begin{lemma}\label{le2} {\rm \cite{STO}} For a prime power $q$, if $q \equiv 1$ (mod 4), then the cyclotomic numbers of order two are given by
\begin{eqnarray*}
(0,0) & = & \frac{q-5}{4},                         \\
(0,1) & = & (1,0) = (1,1) = \frac{q-1}{4}.
\end{eqnarray*} If $q \equiv 3$ (mod 4) then the cyclotomic numbers of order two are given by
\begin{eqnarray*}
(0,1) & = & \frac{q+1}{4},                         \\
(0,0) & = & (1,0) = (1,1) = \frac{q-3}{4}.
\end{eqnarray*}
\end{lemma}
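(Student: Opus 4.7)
The plan is to compute the four cyclotomic numbers $(i,j)_2$, $i,j \in \{0,1\}$, by combining elementary row- and column-sum identities with two symmetry relations obtained from the substitutions $x \mapsto x^{-1}$ and $x \mapsto 1-x$. With only four unknowns the relations will reduce the system to a single free parameter in each residue class of $q$ modulo $4$.

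First I would set $f = (q-1)/2$, observe that $1 \in D_0^{(2,q)}$, and establish the row-sum identities $\sum_j (0,j)_2 = f-1$ and $\sum_j (1,j)_2 = f$; the correction $-1$ accounts for the excluded choice $x = 1$, for which $x - 1 = 0 \notin D_j^{(2,q)}$. The analogous column sums contain a correction $-1$ precisely when $-1 \in D_j^{(2,q)}$, and the residue of $q$ modulo $4$ determines where $-1$ sits: $-1 \in D_0^{(2,q)}$ when $q \equiv 1 \pmod 4$, and $-1 \in D_1^{(2,q)}$ when $q \equiv 3 \pmod 4$. From this point the two cases diverge.

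Next I would exploit the two substitutions. Since for $e=2$ each class $D_i^{(2,q)}$ is closed under inversion (as $-i \equiv i \pmod 2$), sending $x \mapsto x^{-1}$ identifies $\{x \in D_i^{(2,q)} : x - 1 \in D_j^{(2,q)}\}$ with a set of the same shape and yields an identity of the form $(i,j)_2 = (i, i + j + c)_2$, where $c \in \{0,1\}$ is the index of the class containing $-1$. In the same spirit the map $x \mapsto 1-x$ produces $(i,j)_2 = (j + c, i + c)_2$. Combined, these two identities collapse three of the four unknowns into a single common value, and the remaining row-sum equation then pins down the outlier, giving the four values $(q-5)/4$, $(q-1)/4$, $(q-3)/4$, or $(q+1)/4$ in the positions specified by the lemma.

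The main obstacle is bookkeeping rather than conceptual depth: the shifts by $c$ in the two symmetries behave differently modulo $2$ according to whether $q \equiv 1$ or $3 \pmod 4$, and one must be careful to track which row- or column-sum identity applies and which three of the four cyclotomic numbers end up coinciding. Once the correct pattern of equalities is identified, the closed-form values in the lemma follow by routine arithmetic on the two row-sum equations.
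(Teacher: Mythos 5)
The paper offers no proof of this lemma at all --- it is quoted from Storer \cite{STO} --- so there is no in-paper argument to compare against. Your outline is the standard self-contained derivation, and its ingredients are all sound: the row sums $\sum_j(0,j)_2=f-1$ and $\sum_j(1,j)_2=f$ with $f=(q-1)/2$; the fact that $-1\in D_0^{(2,q)}$ if and only if $q\equiv 1\pmod 4$; and the two symmetries $(i,j)_2=(i,\,i+j+c)_2$ (from $x\mapsto x^{-1}$, using $D_{-i}^{(2,q)}=D_i^{(2,q)}$) and $(i,j)_2=(j+c,\,i+c)_2$ (from $x\mapsto 1-x$), where $D_c^{(2,q)}$ is the class containing $-1$. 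For $q\equiv 1\pmod 4$ (so $c=0$) these force $(0,1)=(1,0)=(1,1)$, and the row sums give exactly the stated values.

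One concrete caution for the case $q\equiv 3\pmod 4$ (so $c=1$): there your two symmetries read $(0,0)=(0,1)$ and $(0,0)=(1,1)$, so the three coinciding numbers are $(0,0),(0,1),(1,1)$ and the outlier pinned down by the row sums is $(1,0)=(q+1)/4$ --- not $(0,1)$ as displayed. This is not a flaw in your argument but a convention clash inside the paper: the Appendix defines $(i,j)_e=|D_i^{(e,q)}\cap(D_j^{(e,q)}+1)|$, whereas the displayed values follow Storer's convention $(i,j)_e=|(D_i^{(e,q)}+1)\cap D_j^{(e,q)}|$, which transposes the array. A check with $q=7$, $D_0^{(2,q)}=\{1,2,4\}$, confirms $(1,0)_2=2=(q+1)/4$ under the paper's own definition. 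For $q\equiv 1\pmod 4$ the array is symmetric, and the paper only ever uses the transposition-invariant products $(0,0)_2(1,1)_2$ and $(0,1)_2(1,0)_2$, so nothing downstream is affected; but when you complete the bookkeeping you should either report the outlier as $(1,0)_2$ or explicitly adopt Storer's definition rather than asserting that the values land ``in the positions specified by the lemma.''
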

\begin{lemma}\label{le00A} Let $q$ be an odd prime power. Let $G=(\mathbb{F}_{q},+)\times(\mathbb{F}_{q},+)$, and define \[
D=\{(a,b)\in G\mid a\text{ and }b\text{ are both squares or both nonsquares}\},\]and \[
\tilde{D}=\{(a,b)\in G\mid \text{one of }a,b\text{ is square and the other is nonsquare}\}.\]Then both $D$ and $\tilde{D}$ are $(q^{2},\frac{q^{2}-2q+1}{2},\frac{q^{2}-4q+7}{2},\frac{q^{2}-4q+3}{2})$ partial difference sets.
\end{lemma}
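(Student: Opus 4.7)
The plan is to verify the defining counting property of a partial difference set directly, by decomposing $D$ and $\tilde{D}$ according to the cyclotomic classes $D_0$ (nonzero squares) and $D_1$ (nonsquares) of $\mathbb{F}_q$, and counting differences using the cyclotomic numbers of order $2$ supplied by Lemma \ref{le2}.

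First observe that
\[
D = (D_0 \times D_0) \cup (D_1 \times D_1), \qquad \tilde{D} = (D_0 \times D_1) \cup (D_1 \times D_0),
\]
so $|D| = |\tilde{D}| = 2\left(\tfrac{q-1}{2}\right)^{2} = \tfrac{q^{2} - 2q + 1}{2}$, which confirms the parameter $k$. For $\alpha \in \mathbb{F}_{q}^{*}$ with $\alpha \in D_{m}$, the substitution $a = \alpha a'$ shows that
\[
N_{ij}(\alpha) := |\{a \in D_{i} : a - \alpha \in D_{j}\}| = (i - m,\; j - m)_{2},
\]
with indices read modulo $2$; for $\alpha = 0$, one simply has $N_{ij}(0) = \tfrac{q-1}{2}\delta_{ij}$.

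For any nonidentity $(\alpha, \beta) \in G$, counting ordered pairs in $D \times D$ whose difference is $(\alpha, \beta)$ by splitting on the cyclotomic class indices $i, j$ of $(a, b)$ and of $(a - \alpha, b - \beta)$ gives
\[
\lambda_{D}(\alpha, \beta) = \sum_{i, j \in \{0, 1\}} N_{ij}(\alpha)\, N_{ij}(\beta),
\]
and the analogous identity $\lambda_{\tilde{D}}(\alpha, \beta) = \sum_{i, j} N_{ij}(\alpha)\, N_{1-i,\, 1-j}(\beta)$ holds for $\tilde{D}$. I would then split into three cases: (i) $\alpha, \beta$ both nonzero and in the same class, so $(\alpha, \beta) \in D$; (ii) both nonzero and in opposite classes, so $(\alpha, \beta) \in \tilde{D}$; (iii) exactly one of $\alpha, \beta$ is zero. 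After shifting indices by $m_{\alpha}, m_{\beta}$, the relevant sum collapses to $\sum_{k, l}(k, l)_{2}^{\,2}$ in case (i), to $2\bigl[(0, 0)_{2}(1, 1)_{2} + (0, 1)_{2}(1, 0)_{2}\bigr]$ in case (ii), and to $\tfrac{q-1}{2}\bigl[(0, 0)_{2} + (1, 1)_{2}\bigr]$ in case (iii). Substituting the values from Lemma \ref{le2} yields a polynomial in $q$ whose value depends only on whether $(\alpha, \beta) \in D$ or not, giving $\lambda$ in the former case and $\mu$ in the latter. By symmetry the same count, with the roles of ``same class'' and ``opposite class'' swapped, yields the identical parameters for $\tilde{D}$.

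The main obstacle is really only bookkeeping: one must handle the boundary case where a coordinate of $(\alpha, \beta)$ vanishes (so no cyclotomic number is available in that coordinate) separately from the generic case, and one must verify that the two halves of Lemma \ref{le2} (for $q \equiv 1$ and $q \equiv 3 \pmod{4}$) collapse to the same polynomials in $q$. This is pleasant rather than difficult: the quadratic expressions $\sum_{k, l}(k, l)_{2}^{\,2}$ and $2[(0, 0)_{2}(1, 1)_{2} + (0, 1)_{2}(1, 0)_{2}]$ are invariant under the permutation of cyclotomic numbers that distinguishes the two residue classes, so they evaluate to the same polynomial expressions in $q$ in both cases.
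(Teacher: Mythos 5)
Your proof is correct and follows essentially the same route as the paper's: both reduce the difference count to products of cyclotomic numbers of order two via a case analysis on the quadratic character of the two coordinates (with the vanishing-coordinate case handled separately), the only organizational differences being your compact $N_{ij}(\alpha)=(i-m,j-m)_{2}$ bookkeeping and the fact that you treat $D$ directly where the paper cites \cite{ZHANG} for it. One point worth making explicit: carrying out your substitution gives $\sum_{k,l}(k,l)_{2}^{2}=\frac{q^{2}-4q+7}{4}$ and $2\bigl[(0,0)_{2}(1,1)_{2}+(0,1)_{2}(1,0)_{2}\bigr]=\frac{q^{2}-4q+3}{4}$, i.e., denominators of $4$, which agrees with the paper's own proof and with Example \ref{ex01E} but not with the denominators of $2$ printed in the lemma statement; so your computation (like the paper's) actually establishes $\lambda=\frac{q^{2}-4q+7}{4}$ and $\mu=\frac{q^{2}-4q+3}{4}$, and the parameters in the statement should be read accordingly.
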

\begin{proof} The case for $D$ was shown in \cite{ZHANG}. To show that for $\tilde{D}$, we count the number of solutions to the equation \begin{equation}\label{eq5}(a,b)=(a_{1},b_{1})-(a_{2},b_{2}),\end{equation} where $(a_{1},b_{1}),(a_{2},b_{2})\in \tilde{D}$. We use a method similar to that used in \cite{ZHANG}.
\newline
 Assume that $a$ and $b$ are both square. If $a_{1}$ and $a_{2}$ are square and $b_{1}$ and $b_{2}$ are nonsquare, then, using Lemma \ref{le2}, the number of solutions to (\ref{eq5}) is $(0,0)_{2}(1,1)_{2}$. There are three other cases depending on which of $a_{1},a_{2},b_{1}$ and $b_{2}$ are square and which are nonsquare, and the number of solutions to (\ref{eq5}), as we run over these other possibilities, is one of $(0,1)_{2}(1,0)_{2},(1,0)_{2}(0,1)_{2}$ or $(1,1)_{2}(0,0)_{2}$. Summing over all four possibilities, the total number of solutions to (\ref{eq5}) when $a$ and $b$ are both square is $\frac{q^{2}-4q+3}{4}$ (regardless of the residue of $q$ modulo $4$).
 \par
 The other three cases where neither $a$ nor $b$ are zero can be argued similarly. When $a$ and $b$ are both nonsquare, the total number of solutions to (\ref{eq5}) is $\frac{q^{2}-4q+3}{4}$, and when one of $a$ and $b$ is square and the other is nonsquare, the total number of solutions is $\frac{q^{2}-4q+7}{4}$. If $a\neq0$ and $b=0$ then (\ref{eq5}) becomes $(a_{2}a^{-1},b_{2})+(1,0)=(a_{1}a^{-1},b_{1})$ which, using Lemma \ref{le2} again, has $((0,0)_{2}+(1,1)_{2})\frac{q-1}{2}=\frac{q^{2}-4q+3}{4}$ solutions. A similar argument shows that when $a=0$ and $b\neq0$ the number of solutions to (\ref{eq5}) is again $\frac{q^{2}-4q+3}{4}$. Thus, if $x,y\in G$ are distinct, we have that each member of $\tilde{D}$ appears as a difference of two distinct members of $\tilde{D}$, $\frac{q^{2}-4q+7}{4}$ times, and each member of $G\setminus(\tilde{D}\cup\{(0,0)\})$ appears as a difference of two distinct members of $\tilde{D}$, $\frac{q^{2}-4q+3}{4}$ times. This completes the proof.
\end{proof}


\end{document}